\newcommand{\measurerestr}{%
  \,\raisebox{-.127ex}{\reflectbox{\rotatebox[origin=br]{-90}{$\lnot$}}}\,%
}
\newcommand{\R}{\mathbb{R}}
\newcommand{\Le}{\mathscr{L}}
\newcommand{\T}{\mathbb{T}}
\newcommand*\circled[1]{\tikz[baseline=(char.base)]{
            \node[shape=circle,draw,inner sep=2pt] (char) {#1};}}
\newcommand{\E}{\mathbb{E}}
\newtheorem{definition}{Definition}[section]
\newtheorem*{theorem*}{Theorem}
\newtheorem{theorem}{Theorem}[section]
\newtheorem{corollary}{Corollary}[section]
\newtheorem{lemma}{Lemma}[section]
\newtheorem{remark}{Remark}
\newtheorem{proposition}{Proposition}[section]
\numberwithin{equation}{section}
\newtheorem{question}{Question}
\providecommand{\abstract}{}
\DeclareMathOperator{\W}{W}
\def\ps@pprintTitle{%
 \let\@oddhead\@empty
 \let\@evenhead\@empty
 \def\@oddfoot{\reset@font\hfil\thepage\hfil}%
 \let\@evenfoot\@oddfoot}
\date{}
\begin{document}
\begin{frontmatter}
\title{AN EXPLICIT EULER METHOD FOR SOBOLEV VECTOR FIELDS WITH APPLICATIONS TO THE CONTINUITY EQUATION ON NON CARTESIAN GRIDS}

\author[Scuola Normale Superiore]{Tommaso Cortopassi}

\ead{tommaso.cortopassi@sns.it}

\affiliation[Scuola Normale Superiore]{organization={Scuola Normale Superiore},
            addressline={Piazza Dei Cavalieri 7},
            city={Pisa},
            postcode={56126}, 
            country={Italy}}

\begin{abstract}
We prove a novel stability estimate in $L^\infty _t (L^p_x)$ between the regular Lagrangian flow of a Sobolev vector field and a piecewise affine approximation of such flow. This approximation of the flow is obtained by a (sort of) explicit Euler method, and it is the crucial tool to prove approximation results for the solution of the continuity equation by using the representation of the solution as the push-forward via the regular Lagrangian flow of the initial datum. We approximate the solution in two ways, using different approximations for both the flow and the initial datum. In the first case we give an estimate, which however holds only in probability, of the Wasserstein distance between the solution of the continuity equation and a discrete approximation of such solution. The approximate solution is defined as the push-forward of weighted Dirac deltas (whose centers are chosen in a probabilistic way). In the second case we give a deterministic estimate of the Wasserstein distance using a slightly different approximation of the regular Lagrangian flow and requiring more regularity on the velocity field $u$ than in the previous case. An advantage of both approximations is that they provide an algorithm which is easily parallelisable and does not rely on any particular structure of the mesh with which we discretise (only in space) the domain, moreover the method does not need a CFL condition to be satisfied. We also compare our estimates to similar ones previously obtained in \cite{schlichting2017convergence}, and we show how under certain hypotheses our method provides better convergence rates. 
\end{abstract}
\end{frontmatter}

\section{Introduction}
One of the most important results in classical analysis is the Cauchy-Lipschitz theorem:

\begin{theorem}{\textbf{(Cauchy-Lipschitz) \footnote{There exist many different versions of this theorem. The boundedness of $u$ is not usually required, but we assume it since in the following the velocity $u$ that we consider will always be bounded.}}}\label{Cauchy-Lipschitz theorem}

    Let $u : [0,T] \times \mathbb{R}^d  \to \mathbb{R}^d$ be bounded, continuous in $t$ and Lipschitz in $x$, uniformly in time. Then, for every $x_0 \in \mathbb{R}^d $ and for every $T>0$, the solution of the Cauchy problem

    \begin{equation}\label{Cauchy ODE}
    \begin{cases}
        \frac{d}{dt} y(t) = u(t, y(t))\\
        y(0)= x_0 
    \end{cases}
    \end{equation}

    exists for all $t \in [0, T]$ and it is unique.
\end{theorem}

The Cauchy problem \eqref{Cauchy ODE} is strictly connected with the following initial value problem for the continuity equation:

\begin{equation}\label{continuity equation in Rd}
    \begin{cases}
        \partial_t \rho + \div_x (u \rho)=0 &\text{ in } (0,T) \times \R^d \\
        \rho(0, \cdot )= \rho _0 &\text{ in }  \R^d .
    \end{cases}
\end{equation}

The link between \eqref{Cauchy ODE} and \eqref{continuity equation in Rd} in the classical setting (namely when the velocity field $u$ satisfies the assumptions of Theorem \ref{Cauchy-Lipschitz theorem}) is provided by the method of characteristics, a well-known tool for first order linear PDEs  which shows that the solution $\rho$ of \eqref{continuity equation in Rd} can be represented as

$$ \rho(t, \cdot)= \Phi(t, \cdot) _\#  \rho _0 ,$$

where $\Phi$ is the flow of $u$, i.e. $\Phi(\cdot,x)$ satisfies for every $x$: 

$$ \begin{cases}
    \partial_t \Phi(t,x)= u(t,\Phi(t,x)) &\text{if } t>0\\
    \Phi(0,x)=x  &\text{if } t=0.
\end{cases}$$

It is well-known that in \eqref{Cauchy ODE} the assumption that $u$ is continuous in time can be removed asking for example only summability, but it is also important for several applications to significantly weaken also the Lipschitz regularity condition in the space variable. Indeed \eqref{continuity equation in Rd} is often part of many equations coming from physics (a very important example is fluid dynamics, where the continuity equation is part of the inhomogeneous Navier-Stokes equations) which encodes the ``conservation of mass'', i.e. the quantity $\|\rho(t, \cdot)\|_{L^1 (\R^d)}$ is constant in time, and in this setting the velocity does not have Lipschitz regularity in general. The first breakthrough in this direction \footnote{Some progress had in fact already been made, proving well-posedness for \eqref{Cauchy ODE} also for functions $u$ satisfying the Osgood condition \cite{osgood1898beweis} or a uniform one-sided Lipschitz condition.} was obtained in \cite{diperna1989ordinary}, where the authors proved well-posedness for solutions of \eqref{continuity equation in Rd} in the class $L^\infty (0,T; L^q (\R^d))$ with a velocity field $u \in [L^1 (0,T; W^{1,p} (\R^d))]^d$ with $1/p + 1/q = 1$ and such that  $ \operatorname{div} (u) \in L^1 (0,T; L^\infty (\R^d))$ (see \cite{diperna1989ordinary} for the details). Well-posedness was later extended in \cite{ambrosio2004transport} to velocity fields $u \in [L^1 (0,T; BV(\R^d))]^d$ with the negative part of the divergence (denoted in the following as $\div(u)^-$) in $L^1 (0,T; L^\infty (\R^d))$. Moreover, in \cite{ambrosio2004transport} the author introduced the following notion of ``regular Lagrangian flow'' which generalizes the classical notion of flow.

\begin{definition}{\textbf{(Regular Lagrangian Flow, \cite{ambrosio2004transport})}}\label{rlf definition}

We say that $\Phi$ is a regular Lagrangian flow associated to the vector field $u$ if:

\begin{itemize}
    \item For $\Le ^d$ a.e. $x \in \R ^d$ , $t \mapsto \Phi(t,x)$ is an absolutely continuous solution of $\partial_t \Phi(t,x) = u(t,\Phi(t,x))$ with $\Phi(0,x)=x$;
    \item Let $\Le^d$ be the $d$-dimensional Lebesgue measure. There exists a constant $L=L(T)>0$ such that $\Phi(t, \cdot )_\# \Le ^d \leq L \Le ^d $ for every $t \in [0,T]$.
\end{itemize}

\end{definition}

The constant $L$ is often called in the literature ``compressibility constant'' and it can be proved that this constant is essentially controlled by $\div (u)^-$ (in fluid dynamics a common condition on velocity fields is the incompressibility, namely $\div(u)=0$, which in turn implies $L=1$). Even though the authors proved the well-posedness, both in \cite{diperna1989ordinary} and in \cite{ambrosio2004transport} they did not exhibit any kind of quantitative stability, since their proofs rely on the method of renormalized solutions, which is not well suited for quantitative estimates. The first quantitative result was proved in \cite{crippa2008estimates}, where the authors derive stability estimates on regular Lagrangian flows for $u \in [L^1(0,T; W^{1,p} (\R^d)]^d$ with $p >1$ \footnote{The key estimate in their paper relies on the $L^p -L^p$ strong boundedness of the maximal function, which only holds if $p >1$.}. More recently in \cite{seis2017quantitative, optimalcontinuitySeis} the author used tools from optimal transport theory and the results from \cite{crippa2008estimates} to give stability estimates in a suitable logarithmic Wasserstein distance for the solution $\rho$ of \eqref{continuity equation in Rd}.

\vspace{20pt}

In recent years the topic of approximating $\rho$ in presence of non Lipschitz velocity fields with numerical methods has also been investigated. To the best of our knowledge, the first works in this direction have been \cite{walkington2005convergence} and \cite{boyer2012analysis}, where using a discontinuous Galerkin method and a finite volume scheme respectively they prove strong convergence of approximate solution, although without explicit rates of convergence. However, it is worth mentioning that in \cite{boyer2012analysis} they conjecture a rate of $1/2$ based on numerical simulations, which matches the rate for smoother vector fields \footnote{The method is actually of first order, but the rate of convergence falls to 1/2 if the initial datum is not regular, see \cite{Kuznecov1979,DespresLaxtheorem2004,boyer2012analysis}.}. The rate was proved to be optimal, although only for Lipschitz velocity field, in \cite{MerletLinftyL22007,MerletErrorEstimate2007}. The first quantitative estimate of the error in presence of a vector field with Sobolev regularity (although only in a weak sense, i.e. the logarithmic Wasserstein distance introduced in this context in \cite{seis2017quantitative}) was obtained in \cite{schlichting2017convergence}, where the authors consider an \textit{explicit} finite volume upwind scheme and whose proof requires (for technical reasons) the use of Cartesian meshes \footnote{I.e. a mesh whose elements are isometric axis-parallel rectangular boxes.} to discretize the spatial domain. Such results have been extended to more general meshes in \cite{schlichting2018analysis} by using an \textit{implicit} method, and recently they have been adapted to include diffusion in \cite{navarro2023error, navarro2022optimal}. Recent developments concerning the numerical approximation of \eqref{continuity equation in Rd} have been obtained in \cite{ben2019convergence} and \cite{jabin2024discretizing}. In \cite{ben2019convergence} in particular they obtain strong convergence (i.e. in $L^p$ norm) of approximate solutions of non linear continuity equations of the form $\partial_t \rho + \div(u f(\rho)) =0$ with different approximation schemes: for example their results prove strong convergence of the Lax-Friedrichs (\cite{lax1954weaksol, LeVeque1992NumericalMethods, LeVeque2002Finitevolume}) and of the upwind scheme. Their argument relies on bounding the semi-norms of the approximate solutions on a precise logarithmic Sobolev space (see also \cite{Belg2013compactness}) which embeds compactly in $L^p$. We refer the interested reader to \cite[Theorem 8]{ben2019convergence} pointing out that, while not explicitly derived, the results obtained in \cite{ben2019convergence} allow to get a quantitative rate of convergence (see \cite[Corollary 11]{ben2019convergence} and the discussion after that). However, as in \cite{schlichting2017convergence}, the main restriction is that the estimates are only valid in a Cartesian mesh. The issue has been partially solved in \cite{jabin2024discretizing}, where the authors obtain similar results for the continuity equation on non Cartesian meshes with some additional hypotheses (they require some periodicity condition). Moreover, in \cite{jabin2024discretizing} they manage to treat a system of equations where the density solving \eqref{continuity equation in Rd} and the velocity are coupled in a non linear way (see \cite[Theorem 1.7]{jabin2024discretizing}). We stress that, since the continuity equation is often part of many different physical models, studying the coupling of \eqref{continuity equation in Rd} with other equations is crucial in many different parts of physics and biology. A non exhaustive list comprehend \cite{KineticenergycontrolGallouette2010,Gallouetunconditionallystable2008} for compressible Navier-Stokes, \cite{MACSchemeGallouet2023,HerbinConvergentFEMStokes2009II,HerbinConvergentFEMStokes2009I} for compressible Stokes, \cite{TopazSwarmingPatterns2004} for swarming and \cite{BurgerAsymptoticAnalysis2008} for chemotaxis, see also \cite{jabin2024discretizing} and the references therein. To the best of our knowledge, so far every approximation scheme has used an Eulerian point of view of \eqref{continuity equation in Rd}. We will instead use a Lagrangian one, managing to prove quantitative estimates, although only in a weak sense, for non Cartesian meshes without \textit{any} regularity assumptions, requiring only a bound on the diameter of the cells. This seems to be the first \textit{explicit} method which gives quantitative estimates on a non Cartesian mesh (we recall that in \cite{schlichting2018analysis} they used an \textit{implicit} scheme). Other useful features of our method, from the point of view of numerics, are that it is parallelisable in a straightforward way and that it does not need to satisfy a CFL condition (see Remark \ref{rmk: no cfl}).

\vspace{10pt}

The main result of this paper, i.e. Theorem \ref{theorem on estimate between real rlf and euler flow}, is a stability estimate in the spirit of \cite{crippa2008estimates}, which we use as a tool to prove approximation results for the solution of the continuity equation. In particular we consider the following initial value problem:

\begin{equation}\label{continuity equation}
    \begin{cases}
        \partial_t \rho + \div_x (u \rho)=0 &\text{ in } (0,T) \times \T^d \\
        \rho(0, \cdot )= \rho _0 &\text{ in }  \T^d .
    \end{cases}
\end{equation}

\begin{remark}\label{torus remark}
Our results are stated on the $d$-dimensional torus $\T^d$ for simplicity. However, with some care, they can be extended to more general ambient spaces. 
\end{remark}

We will work with functions belonging to the following class:

\begin{equation}\label{main hypotheses}
\begin{cases}
\rho_0 \in L^q (\mathbb{T}^d)\\
u \in [L^1 (0,T; W^{1,p} (\T ^d)) \cap L^\infty ((0,T) \times \T ^ d)]^d &\text{with }1/p + 1/q = 1 , \; p>1 \\
\div (u)^-  \in L^\infty ((0,T) \times \T^d) &\text{if } 1< p \leq d\\
\div (u)^-  \in L^1 (0,T ; L^\infty( \T^d)) &\text{if } p>d. 
\end{cases}
\end{equation}

Notice that when $ 1< p \leq d$ we are asking a little more than what is asked in \cite{ambrosio2004transport} for having well-posedness. Namely, we are not asking that $\div (u)^{-} \in L^1 (0,T; L^\infty (\T^d))$, but rather $\div (u)^-  \in L^\infty ((0,T) \times \T^d)$. This stronger assumption will be crucial in our proof of Lemma \ref{compressibility estimate}. The idea is to use Lagrangian estimates to get estimates at the Eulerian level (i.e. for the solution $\rho$ of \eqref{continuity equation}). The key observation is that in the DiPerna-Lions setting it still holds that the solution $\rho$ of $\eqref{continuity equation}$ can be represented as 

$$ \rho(t, \cdot) = \Phi (t, \cdot)_\# \rho_0,$$

with $\rho_0$ the initial datum and $\Phi$ the regular Lagrangian flow associated to the velocity $u$, as defined in Definition \ref{rlf definition}. Then we approximate the solution $\rho$ with

$$ \rho_E (t, \cdot) = \Phi_E (t, \cdot)_\# \mu , $$

where $\Phi_E$ is an approximation of $\Phi$ and $\mu$ is an approximation of $\rho_0$. This method is in principle quite flexible, in the sense that any ``good'' approximation of $\Phi$ and $\rho_0$ should give a ``good'' approximation of $\rho$ which we will specify more precisely later on. In particular we will approximate $\rho$ in two ways, using different approximations for both $\Phi$ and $\rho_0$. In the first one we consider a mesh of the domain with some mild assumptions on the sets $\{Q_i\}_{i=1} ^N$ of the mesh, and for every set $Q_i$ we choose (according to a uniform probability distribution) a point $x_i \in Q_i$ and a weight $M_i >0$ so that $\mu = \sum_{i=1} ^M M_i \delta_{x_i}$ approximates the initial datum $\rho_0$. As for the approximation of the regular Lagrangian flow $\Phi$ we construct, for every point $x \in \T^d$, a piecewise affine map $\Phi_E (\cdot , x)$ obtained by a (sort of) explicit Euler method applied to the velocity field $u$. The precise definition of $\Phi_E$ will be given in Definition \ref{definition of euler flow}. The approximation $\rho_E$ will therefore be a weighted sum of Dirac deltas whose positions evolve in time in a piecewise linear fashion. We prove that $\rho_E$ approximates in Wasserstein distance the solution $\rho$ of \eqref{continuity equation}, although only in probability (see Theorem \ref{1 wasserstein theorem}, Theorem \ref{logarithmic wasserstein theorem} and Proposition \ref{Monte-Carlo proposition} for the precise statements). We do not know if, at least in some cases, similar estimates hold for \emph{every} choice of the initial positions of the Dirac deltas, i.e. in a deterministic sense (see Remark \ref{remark that maybe estimates hold pointwise}). We also consider a different approximation $\bar{\Phi}_E$ of the flow (see Definition \ref{definition of mean euler flow}) which, if $p>d$ and considering an approximation of the initial datum in the strong $L^1$ norm, provides a deterministic approximation of $\rho$ in Wasserstein distance (see Theorem \ref{teorema approssimazione diffusa}). The novelty is that this an \textit{explicit} method which can be performed on unstructured meshes, it provides an easily parallelisable algorithm and it uses the Lagrangian formulation of the problem, whereas in previous works the Eulerian point of view was always employed. 

\vspace{20pt}

Let us now discuss briefly the organisation of the paper. Section 2 is dedicated to proving Theorem \ref{theorem on estimate between real rlf and euler flow}, where we give a novel $ L^\infty _t (L^p _x)$ stability estimate for the difference of the regular Lagrangian flow $\Phi$ of a velocity field $u$ satisfying \eqref{main hypotheses} and an approximation $\Phi_E$ of such flow, whereas previously known stability estimates considered the difference of two regular Lagrangian flows (see \cite{crippa2008estimates}). In Section 3 we use such a stability estimate to approximate, in Wasserstein distance, the solution $\rho$ of the continuity equation \eqref{continuity equation} via the push-forward through $\Phi_E$ of suitably chosen Dirac deltas. The estimates are proved both in $1$-Wasserstein distance and logarithmic Wasserstein distance. In the latter case in particular we show that in expected value the rate of convergence is of order $1/2$ if $1<p \leq d$, as the rate obtained in \cite{schlichting2017convergence}, and it is better than that if $p>d$ (see Theorem \ref{1 wasserstein theorem} and Theorem \ref{logarithmic wasserstein theorem} for the details). We believe that the logarithmic rates we obtain are optimal for the range $1 < p \leq d$ (as suggested also in \cite{schlichting2017convergence}), but the case $p>d$ does not give any precise rate. In Section 4 we consider, for reasons explained in Remark \ref{remark p>d}, only velocity fields $u\in [L^1 (0,T; W^{1,p} (\T^d) \cap L^\infty ((0,T) \times \T^d)] ^d$ such that $p>d$ and we assume to approximate the initial datum strongly in $L^1$. Under these stronger assumptions, using a slightly different approximation $\bar{\Phi}_E$ of the regular Lagrangian flow, we are able to prove estimates similar to the ones of Section 3, but which hold deterministically. In this case, moreover, we prove a convergence of order 1 of the method with respect to the logarithmic Wasserstein distance. We refer the reader to Theorem \ref{teorema approssimazione diffusa} for the details. In Section 5 we sum up the results obtained and highlight some open questions and future research perspectives. As a final note, we remark that in \cite{erroranalysistheta} we are also considering an approximation via a $\theta$-method of the regular Lagrangian flow, with which we can approximate the solution of \eqref{continuity equation} with a divergence free vector field which, even outside the range of DiPerna-Lions theory, is able to select the unique Lagrangian solution of \eqref{continuity equation}, i.e. the unique solution which is representable as a push-forward of the initial datum via the regular Lagrangian flow of the velocity field. 

\section{Stability estimates}

 Let us immediately state the main result of the paper, i.e. a stability estimate between the regular Lagrangian flow $\Phi$ and an approximation $\Phi_E$ of such flow, which will be introduced shortly after the statement.

\begin{theorem}{\textbf{($L^p$ stability estimate between $\Phi$ and $\Phi_E$)}} \label{theorem on estimate between real rlf and euler flow}

    Let $u$ satisfy assumptions \eqref{main hypotheses}. Then if $\Phi$ is the regular Lagrangian flow of $u$ and $\Phi_E$ is defined as in Definition \ref{definition of euler flow} with $\delta=\sqrt{\Delta t}$ if $1 < p \leq d$, we have

    $$ \|\Phi(t,\cdot) - \Phi_E (t,\cdot)\|_{L^p(\T^d)} \lesssim C_t |\log(\Delta t)|^{-1}, $$

    where $C_t \to 0$ as $t \to 0$. Moreover, if $p >d$ the same estimate holds even if $\div (u)^- \in L^1 (0,T; L^p (\T^d))$.
\end{theorem}

We point out here that we will make extensive use of the notation $X \lesssim Y$, which means $X \leq C Y$ for some positive constant $C>0$ that we omit explicitly writing both to keep the notation lighter and because $C$ is uniformly bounded. Crucially relying on Theorem \ref{theorem on estimate between real rlf and euler flow} we will be able to prove two approximation results for the solution $\rho$ of \eqref{continuity equation} in different Wasserstein distances: Theorem \ref{1 wasserstein theorem} and Theorem \ref{logarithmic wasserstein theorem}. In section 4 we will also introduce another approximation $\bar{\Phi}_E$ of $\Phi$ (see Definition \ref{definition of mean euler flow}) and we will prove a result analogous to Theorem \ref{theorem on estimate between real rlf and euler flow}, i.e. Proposition \ref{estimate between Phi and  bar PhiE}, with which we will get another approximation of the density $\rho$: see Theorem \ref{teorema approssimazione diffusa}.

\vspace{20pt}

The vector field $\Phi_E$ is explicitly constructed by a sort of forward Euler method, and we now give its precise definition.

\begin{definition}{\textbf{(Euler approximation of a regular Lagrangian flow)}}\label{definition of euler flow}

Let $u$ satisfy conditions \eqref{main hypotheses} with $1 < p \leq d$, and let 

\begin{equation}\label{regularised velocity} u_\delta (t, \cdot ) \coloneqq \eta_\delta \ast u (t, \cdot )
\end{equation}

for some $\delta >0$, where $\eta$ is a standard mollifier in $\R^d$, $\eta_\delta(x) \coloneqq \delta^{-d} \eta(x/\delta)$ and the convolution is in space only.  We denote as $\Phi_E$ (where  ``E" stands for ``Euler") the so called Euler approximation of the regular Lagrangian flow $\Phi$ of $u$, with time step $\Delta t$ and regularisation parameter $\delta$. For $ t \in (t_n , t_{n+1})$ we define:

\begin{equation}\label{euler flow definition equation}
    \Phi_E (t,x) \coloneqq \Phi_E (t_n, x) + (t-t_n) \fint_{t_n} ^{t_{n+1}} u_\delta(s, \Phi_E (t_n, x)) ds,
\end{equation}

where $0=t_0 < t_1 < \dots < t_M =T$ is a partition of $[0,T]$ with $t_{i+1} - t_i =\Delta t$ and \newline $\Phi_E (0,x)= x \; \forall x \in \T^d$. If $p>d$, we define $\Phi_E$ directly with the $u$ as 

\begin{equation}\label{euler flow definition equation p>d}
    \Phi_E (t,x) \coloneqq \Phi_E (t_n, x) + (t-t_n) \fint_{t_n} ^{t_{n+1}} u (s, \Phi_E (t_n, x)) ds.
\end{equation}

 We remark that in order to keep the notation as light as possible, we omit explicitly writing the dependence of $\Phi_E$ on $\Delta t$ and (eventually) $\delta$.

\end{definition}

In order to prove Theorem \ref{theorem on estimate between real rlf and euler flow}, we need some preliminary estimates. We now give a Lemma which estimates the intrinsic error in considering $u_\delta$ instead of $u$ if $1 < p \leq d$. Notice that this is a $L^p$ extension of an equivalent $L^1$ estimate obtained in \cite[Theorem 2.9]{crippa2008estimates} which may be of independent interest. To highlight this link, we have decided to keep the notation of \cite{crippa2008estimates}, denoting as $b$ and $\tilde{b}$ the vector fields and as $X$ and $\tilde{X}$ their regular Lagrangian flows.

\begin{proposition}{\textbf{($L^p$ stability estimate for regular Lagrangian flows)}}\label{Lp stability estimate for regular Lagrangian flows}

Let $b$ and $\tilde{b}$ be bounded vector fields belonging to $L^1 (0,T; W^{1,p} (\R^d))$ for some $p>1$. Let $X$ and $\tilde{X}$ be regular Lagrangian flows of $b$ and $\tilde{b}$ respectively and denote by $L$ and $\tilde{L}$ their compressibility constants. Then, for every time $t \in [0,T]$, we have

    $$ \|X( t, \cdot )- \tilde{X} (t, \cdot)\|_{L^p(B_r (0))} \leq C_t \left | \log (\| b - \tilde{b} \|_{L^1 (0,t; L^p ( B_R (0)))} ) \right|^{-1} ,$$

where $R= r + T\| \tilde{b}\|_\infty$ and the constant $C_t$ only depends on $t, r, \|b\|_\infty , \|\tilde{b}\|_\infty , L, \tilde{L} $ and $\|D_x b\|_{L^1 (L^p)}$. Moreover, $C_t \to 0$ as $t \to 0$.
 
\end{proposition}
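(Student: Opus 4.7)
The plan is to adapt the Crippa--De Lellis integral-functional argument from the $L^1$ setting of \cite{crippa2008estimates} to $L^p$. Specifically, I would introduce the logarithmic functional
$$ g(t) \coloneqq \int_{B_r(0)} \log^p\!\left(1 + \frac{|X(t,x)-\tilde{X}(t,x)|}{\delta}\right)\! dx, $$
for a parameter $\delta>0$ to be optimized later, and aim to control $g(t)$ in terms of $\delta$ and $\|b-\tilde b\|_{L^1_t L^p_x}$, then recover the bound on $\|X-\tilde X\|_{L^p}$ from a Chebyshev-type inequality on the super-level sets of $|X-\tilde X|$.

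The computation proceeds by differentiating $g$ in time and splitting
$$ \frac{d}{dt}\log\!\left(1+\tfrac{|X-\tilde X|}{\delta}\right) \le \frac{|b(t,X)-b(t,\tilde X)|}{\delta+|X-\tilde X|} + \frac{|b(t,\tilde X)-\tilde b(t,\tilde X)|}{\delta}. $$
For the first piece I would use the maximal-function inequality $|b(t,x)-b(t,y)|\le C|x-y|(M|Db|(x)+M|Db|(y))$ (valid a.e. for $W^{1,p}$ functions), cancelling the $|X-\tilde X|$ factor with the denominator. Applying H\"older's inequality with exponents $p/(p-1)$ and $p$ factors out $g(t)^{(p-1)/p}$, and then changing variables via the compressibility bounds $L,\tilde L$ yields $L^p$ norms of $M|Db|$ and of $b-\tilde b$ over $B_R$. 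The $L^p\!\to\!L^p$ boundedness of the Hardy--Littlewood maximal operator (which is where the restriction $p>1$ enters crucially) then converts $\|M|Db|\|_{L^p}$ into $\|Db\|_{L^p}$. After dividing by $g^{(p-1)/p}$ this produces the differential inequality
$$ \frac{d}{dt}\, g(t)^{1/p} \le C_1\,\|Db(t,\cdot)\|_{L^p(B_R)} + \frac{C_2}{\delta}\,\|b(t,\cdot)-\tilde b(t,\cdot)\|_{L^p(B_R)}, $$
which, integrated in time with $g(0)=0$, gives $g(t)^{1/p} \le C_1\|Db\|_{L^1(0,t;L^p)} + \frac{C_2}{\delta}\|b-\tilde b\|_{L^1(0,t;L^p(B_R))}$.

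To pass from $g$ back to the $L^p$ norm I would pick a threshold $\eta>0$ and split $B_r$ according to whether $|X-\tilde X|\le\eta$ or not; on the latter set $|X-\tilde X|\le r+R$ and the Chebyshev bound $|\{|X-\tilde X|>\eta\}| \le g(t)/\log^p(1+\eta/\delta)$ applies, so
$$ \|X-\tilde X\|_{L^p(B_r)}^p \le \eta^p |B_r| + (r+R)^p\,\frac{g(t)}{\log^p(1+\eta/\delta)}. $$
The final step is to choose $\delta$ as a suitable power of $\epsilon\coloneqq\|b-\tilde b\|_{L^1(0,t;L^p(B_R))}$ (e.g.\ $\delta=\epsilon^{1-\alpha}$ for small $\alpha>0$) and $\eta=\sqrt{\delta}$, so that $g(t)^{1/p}$ stays bounded, $\log(1+\eta/\delta)\sim\tfrac12|\log\delta|\sim\tfrac{1-\alpha}{2}|\log\epsilon|$, and the polynomial term $\eta^p$ is dominated by the logarithmic one. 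The main obstacle I expect is a careful bookkeeping at this optimization stage to guarantee the claim $C_t\to 0$ as $t\to 0$: one has to let $\delta$ depend on $t$ in a way that makes the contribution $C_2\epsilon(t)/\delta(t)$ vanish with $t$, while preserving the $|\log\epsilon|^{-1}$ rate in $\epsilon$; the choice $\delta(t)=\epsilon(t)^{1-\alpha}$ achieves exactly this, since then $\epsilon(t)/\delta(t)=\epsilon(t)^\alpha\to 0$ and $|\log\delta(t)|\sim(1-\alpha)|\log\epsilon|$.
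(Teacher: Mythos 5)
Your proposal is correct and follows essentially the same route as the paper: the same logarithmic integral functional, the same differential inequality obtained via the pointwise maximal-function estimate, H\"older, and a change of variables with the compressibility constants, and the same Chebyshev-type interpolation to recover the $L^p$ bound (the paper fixes the specific choice $\delta=\sqrt{\|b-\tilde b\|_{L^1_tL^p_x}}$, corresponding to your $\alpha=1/2$, and parametrizes the final split by a measure bound rather than a height threshold, but these are cosmetic differences within the same argument).
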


\begin{proof}
Let $\delta \coloneqq \sqrt{\|b - \tilde{b} \|_{L^1 (0, t; L^p (B_{R} (0)))}} $ with $R= r + T\|\tilde{b} \|_\infty$ and let 

$$ g (t) \coloneqq \left[\int_{B_r (0)} \log \left(1+ \frac{|X (t,x) - \tilde{X} (t,x)|}{\delta} \right)^p dx \right]^{1/p}.$$

For simplicity of notation, we will omit explicitly writing the dependence on $t$ and $x$ when this will not be necessary. So we will often write $X $ and $ \tilde{X} $ in place of $X(t,x)$ and $ \tilde{X} (t,x)$. Differentiating in time:

\begingroup
\allowdisplaybreaks
\begin{align}\label{g' inequality}
    &\frac{d}{dt} g (t) \\
    &=\frac{1}{p} g(t) ^{1-p} \int_{B_r (0)} p \log \left( 1 + \frac{|X - \tilde{X}|}{\delta} \right) ^{p-1} \frac{\delta }{\delta + |X - \tilde{X}|}  \frac{1}{\delta} \frac{X - \tilde{X}}{|X - \tilde{X}|} \left(\frac{d}{dt}X - \frac{d}{dt} \tilde{X} \right)dx \nonumber \\
    &\leq \frac{g(t)^{1-p}}{\delta}  \int_{B_r (0)} \log \left(  1 + \frac{|X - \tilde{X}|}{\delta} \right) ^{p-1} \frac{1}{1 + |X - \tilde{X}|/\delta} |b (t, X) - \tilde{b} (t, \tilde{X})| dx  \nonumber \\
    & \leq \underbrace{\frac{g(t)^{1-p}}{\delta}  \int_{B_r (0)} \log \left(  1 + \frac{|X - \tilde{X}|}{\delta} \right) ^{p-1} \frac{1}{1 + |X - \tilde{X}|/\delta} |b (t, X) - b (t, \tilde{X})| dx}_{\circled{1}}  \nonumber \\
    &+  \underbrace{\frac{g(t)^{1-p}}{\delta}  \int_{B_r (0)} \log \left(  1 + \frac{|X - \tilde{X}|}{\delta} \right) ^{p-1} \frac{1}{1 + |X - \tilde{X}|/\delta} |b (t, \tilde{X}) - \tilde{b} (t, \tilde{X})| dx}_{\circled{2}}. \nonumber 
\end{align} 
\endgroup

Considering $\circled{1}$ we have, thanks to Lemma \ref{pointwise bv inequality}:

\begin{align}\label{velocity pointwise inequality}
 |b (t, X (t,x)) - b (t, \tilde{X} (t,x))| \leq c_n |X (t,x) - \tilde{X} (t,x)| &[M_{\tilde{R}} (|Db|) (t, X(t,x)) \\
 &+ M_{\tilde{R}} (|Db|) (t, \tilde{X}(t,x))],   \nonumber
 \end{align}

 for almost every $t \in [0, T]$ and (at fixed $t$) for almost every $x \in \R^d$, with $\tilde{R} = T(\|b\|_\infty + \|\tilde{b}\|_\infty)$.

\begin{remark}

Notice that the compressibility constraint in the definition of regular Lagrangian flow is crucial for \eqref{velocity pointwise inequality} to hold almost everywhere. Indeed, for almost every $t$ we have that $b_t (x) \coloneqq b (t,x) \in W^{1, p} (\R^d)$. Then \eqref{velocity pointwise inequality} holds only if $X(t,x) , \tilde{X} (t,x) \in \R^d \setminus N_t$, with $N_t$ a null set. Then inequality \eqref{velocity pointwise inequality} holds for almost every $x \in \R^d$ since

$$ \Le ^d (X(t, \cdot ) ^{-1} (N_t)) \leq L \Le ^d (N_t) =0 \text{ and } \Le ^d (\tilde{X} (t, \cdot ) ^{-1} (N_t)) \leq \tilde{L} \Le ^d (N_t) =0.$$
\end{remark}

Using \eqref{velocity pointwise inequality}:

\allowdisplaybreaks
\begin{align*}
    &\begin{aligned}[t]
    \circled{1} \leq c_n g(t) ^{1-p} \int_{B_r (0)} \log \left(  1 + \frac{|X - \tilde{X}|}{\delta} \right) ^{p-1} \frac{|X - \tilde{X}|/\delta}{1 + |X - \tilde{X}|/\delta}&[M_{\tilde{R}}(| Db|) (t, X(t,x)) \\
    & \hspace{-1cm}+M_{\tilde{R}} (|Db|) (t, \tilde{X}(t,x))] dx
    \end{aligned}
    \\
    &\begin{aligned}
    \leq  c_n g(t) ^{1-p} \int_{B_r (0)} \log \left(  1 + \frac{|X - \tilde{X}|}{\delta} \right) ^{p-1}  &[M_{\tilde{R}} (|Db|) (t, X(t,x))\\
    &+ M_{\tilde{R}} (|Db|) (t, \tilde{X}(t,x))] dx
    \end{aligned}\\
    &=\underbrace{c_n g(t) ^{1-p} \int_{B_r (0)} \log \left(  1 + \frac{|X - \tilde{X}|}{\delta} \right) ^{p-1} M_{\tilde{R}} (|Db|) (t, X(t,x)) dx}_{ \circled{3}}  \\
    &+\underbrace{c_n g(t) ^{1-p} \int_{B_r (0)} \log \left(  1 + \frac{|X - \tilde{X}|}{\delta} \right) ^{p-1} M_{\tilde{R}} (|Db|) (t, \tilde{X}(t,x)) dx}_{\circled{4}}.
\end{align*}

For $\circled{3}$, applying H\"older's inequality with exponents $p$ and $p/(p-1)$ yields

$$ \circled{3} \leq c_n g(t) ^{1-p} g(t) ^{p-1} \left( \int_{B_r (0)} [M_{\tilde{R}} Db (t, X(t,x))]^p dx \right) ^{1/p},$$

and changing variables setting $y=X(t,x)$ gives

$$ \circled{3} \leq c_n L^{1/p} \left(\int_{B_{r+ T \|b\|_\infty} (0)} [M_{\tilde{R}} Db (t,y)]^p dy \right)^{1/p} \leq c_n c_{p,n} L^{1/p} \|Db (t, \cdot)\|_{L^p (B_{R'} (0))}, $$

with $R'= r + 3 T \max\{\|b\|_\infty, \|\tilde{b}\|_\infty \} $, thanks to the continuity of the local maximal function on $L^p$ (\cite[Lemma A.2]{crippa2008estimates}). With this choice of $R'$, it is easy to see that with the same argument we get

$$ \circled{4} \leq c_n c_{p,n} \tilde{L}^{1/p} \|Db (t, \cdot)\|_{L^p (B_{R'} (0))} .$$

As for $\circled{2}$, we clearly have:

\begin{align*}
    \circled{2} \leq \frac{g(t) ^{1-p}}{\delta} \int_{B_r (0)} \log \left(  1 + \frac{|X - \tilde{X}|}{\delta} \right) ^{p-1}|b (t, \tilde{X} (t,x)) - \tilde{b} (t, \tilde{X} (t,x))| dx.
\end{align*}

First using H\"older's inequality and then changing variables as before:

\begin{align*}
    \circled{2} \leq \frac{ \tilde{L} ^{1/p} }{\delta } \|b(t, \cdot) - \tilde{b} (t, \cdot) \|_{L^p (B_{R} (0))}    ,
\end{align*}

where we recall that $R= r + T \|\tilde{b}\|_\infty$. Inequality \eqref{g' inequality} now reads:

$$ \frac{d}{dt} g(t) \leq c_n c_{p,n} \left( L^{1/p} + \tilde{L} ^{1/p} \right)  \|Db (t, \cdot)\|_{L^p (B_{R'} (0))}  + \frac{\tilde{L} ^{1/p}}{\delta} \|b(t, \cdot) - \tilde{b} (t, \cdot) \|_{L^p (B_{R} (0))} .$$

Thanks to the choice of $\delta$, integrating in time from $0$ to $t \in [0,T]$ we get:

$$ g(t) \leq C_t \text{ for all } t \in [0, T],$$

with $C_t$ being a constant depending on $ t, r, L, \tilde{L}, \|D_x b \|_{L^1 (L^p (B_{R'} (0))} , \|b\|_\infty , \|\tilde{b}\|_\infty $ and $p$, and it can be easily seen that $C_t \to 0$ as $t \to 0$. This means that, for every $t \in [0,T]$:

\begin{equation}\label{Boundedness of the Lp norm of the logarithm} \int_{B_r (0)} \log \left(1+ \frac{|X (t,x) - \tilde{X} (t,x)|}{\delta} \right)^p dx  \leq C_t ^p .\end{equation}
 
By Chebychev inequality, for every $\eta>0$ there exists a set $K$ such that $|B_r \setminus K| \leq \eta$ and 
 
 $$ \log \left( \frac{|X (t,x) - \tilde{X} (t,x)|}{\delta} +1\right) ^p \leq \frac{C_t ^p}{\eta} \text{ on }K. $$ 
 
 Then, on $K$:

 $$ |X (t,x) - \tilde{X} (t,x)|^p \leq \delta^p \exp \left(\frac{C_t p}{\eta^{1/p}} \right).$$

 Integrating over $B_r$:

\begin{equation} \label{mid Lp estimate}
\int_{B_r (0)} |X (t,x)- \tilde{X} (t,x) |^p dx \leq \eta (\|X \|_\infty + \|\tilde{X} \|_\infty ) ^p + \omega_d r^d \delta^p \exp \left(\frac{C_t p}{\eta^{1/p}} \right),
\end{equation}

with $\omega _d \coloneqq |B_1 (0)|$. Choosing $\eta= 2^p C_t ^p |\log \delta |^{-p}$ we get

$$\exp \left(\frac{C_t p}{\eta^{1/p}} \right)= \exp\left(\frac{p}{2|\log(\delta)|^{-1}}\right)= \exp \left( - \frac{p}{2} \log(\delta) \right)= \exp( \log(\delta^{-p/2})) = \delta ^{-p/2} ,$$

so \eqref{mid Lp estimate} reads as:

\begin{equation}\label{last estimate}   
\int_{B_r (0)} |X(t,x) - \tilde{X} (t,x) |^p 
 dx \leq 2^p C_t ^p (\|X \|_\infty + \|\tilde{X} \|_\infty )^p  |\log \delta |^{-p}  + \omega_d r^d \delta ^{p/2} .
 \end{equation} 

Taking the p-th root, and assuming that $\delta= \sqrt{\| b - \tilde{b} \|_{L^1 (0, t; L^p ( B_R (0)))}}$ is small enough so that $|\log(\delta)|^{-1} >>\sqrt{\delta}$, we conclude.
    \end{proof}

\begin{remark}[The choice of $\delta$]\label{remark on boundedness of logarithm Lp estimate with Lp norm of difference velocities}
\

    In the previous Proposition we assumed $\delta \coloneqq \sqrt{\|b - \tilde{b} \|_{L^1 (0, t; L^p (B_{R} (0)))}} $ in order to bound $g(t)$ with a constant $C_t \to 0$ as $t \to 0$. If instead we had chosen $\delta =\|b - \tilde{b} \|_{L^1 (0, t; L^p (B_{R} (0)))}$ the proof would have still worked, with $g(t) \leq C$. In particular, it holds:

    \begin{equation*}
    \left| \left| \log \left( 1 + \frac{|X(t, \cdot) - \tilde{X}(t, \cdot)|}{\|b - \tilde{b} \|_{L^1 (0, t; L^p (B_{R} (0)))}} \right) \right| \right|_{L^p (\T^d)} \leq C.
    \end{equation*}
\end{remark}

Thanks to Proposition \ref{Lp stability estimate for regular Lagrangian flows} we easily obtain:

\begin{corollary}{\textbf{(Error of regularisation)}}\label{Error of regularisation}

    Consider $\Phi$ the regular Lagrangian flow of a vector field $u$ satisfying \eqref{main hypotheses}, and let $\Phi_\delta$ be the (classical) flow of $u_\delta$ defined in \eqref{regularised velocity}. Then for every $t \in [0,T]$:

$$\|\Phi (t, \cdot) -\Phi_\delta(t, \cdot)\|_{L^p (\T^d)} \lesssim  C_t |\log (\|u- u_\delta \|_{L^1(0,t; L^p (\T^d))})|^{-1} \leq  C_t |\log (\delta)|^{-1} ,$$

with $C_t \to 0$ as $t \to 0$.
 
\end{corollary}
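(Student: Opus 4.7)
The plan is to apply Proposition \ref{Lp stability estimate for regular Lagrangian flows} directly, with $b = u$ and $\tilde{b} = u_\delta$, so that $X = \Phi$ is the regular Lagrangian flow of $u$ and $\tilde{X} = \Phi_\delta$ is the classical flow of the smooth field $u_\delta$ (which is in particular a regular Lagrangian flow). Since the problem is set on the compact torus $\T^d$, the localisation onto balls $B_r, B_R \subset \R^d$ in that proposition becomes harmless: one picks $r$ large enough that $B_r$ contains a fundamental domain, and every $L^p(B_{R'})$ norm reduces to $L^p(\T^d)$ by periodicity. This already gives the first inequality of the statement, provided the resulting constant $C_t$ does not depend on $\delta$.

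The bulk of the argument is therefore to verify that all the quantities on which $C_t$ depends remain uniform in $\delta$. Boundedness is immediate: $||u_\delta||_\infty \leq ||u||_\infty$ since $\eta_\delta$ is a positive kernel of unit mass, and $||D u_\delta||_{L^1(L^p)} \leq ||Du||_{L^1(L^p)}$ by Young's inequality applied to $Du_\delta = \eta_\delta \ast Du$. For the compressibility constants, one uses $\div u_\delta = \eta_\delta \ast \div u$ together with the elementary inequality $(\eta_\delta \ast f)^- \leq \eta_\delta \ast f^-$, which transfers the assumption \eqref{main hypotheses} on $(\div u)^-$ to $(\div u_\delta)^-$ with the same norm. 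Since $\Phi_\delta$ is a classical flow, its Jacobian satisfies $\det D\Phi_\delta(t,\cdot) \geq \exp\bigl(- \int_0^t ||(\div u_\delta)^-(s,\cdot)||_\infty \, ds\bigr)$, so its compressibility constant $\tilde{L}$ is bounded uniformly in $\delta$, and the analogous bound for the compressibility constant $L$ of $\Phi$ is exactly the content of the forthcoming Lemma \ref{compressibility estimate}. Plugging these uniform quantities into Proposition \ref{Lp stability estimate for regular Lagrangian flows} yields the desired $C_t$, still vanishing as $t \to 0$.

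The second inequality is then an easy consequence of the standard mollification estimate $||u(t,\cdot) - u_\delta(t,\cdot)||_{L^p(\T^d)} \leq C \delta \, ||Du(t,\cdot)||_{L^p(\T^d)}$ valid for $u(t,\cdot) \in W^{1,p}(\T^d)$, which after integration in time gives $||u - u_\delta||_{L^1(0,t; L^p)} \leq C \delta \, ||Du||_{L^1(0,T; L^p)}$. For $\delta$ small enough this implies $|\log(||u - u_\delta||_{L^1(L^p)})|^{-1} \leq 2 |\log \delta|^{-1}$, and the extra factor is absorbed into $C_t$. The only genuinely non-routine ingredient in the whole argument is the uniform compressibility bound for $\Phi$, deferred to Lemma \ref{compressibility estimate} (which is precisely where the strengthened hypothesis $(\div u)^- \in L^\infty$ in the range $1 < p \leq d$ comes into play); everything else is a direct specialisation of Proposition \ref{Lp stability estimate for regular Lagrangian flows}.
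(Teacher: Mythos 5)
Your approach matches the paper's: the first inequality comes from applying Proposition~\ref{Lp stability estimate for regular Lagrangian flows} with $b=u$, $\tilde b=u_\delta$, and the second comes from the elementary mollification bound $\|u-u_\delta\|_{L^1(0,t;L^p)}\lesssim\delta$ (which the paper derives via Lemma~\ref{pointwise bv inequality} and the maximal operator, but your direct ``standard mollification estimate'' is the same thing). Your extra care in checking that $\|u_\delta\|_\infty$, $\|Du_\delta\|_{L^1(L^p)}$, and the compressibility constant of $\Phi_\delta$ are uniform in $\delta$ is exactly the right thing to verify, and your argument $(\div u_\delta)^-\leq\eta_\delta\ast(\div u)^-$ plus the exponential Jacobian formula is correct.

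One misattribution should be fixed, though: you claim the compressibility bound on $\Phi$ itself ``is exactly the content of the forthcoming Lemma~\ref{compressibility estimate}'' and that this is where the strengthened assumption $(\div u)^-\in L^\infty((0,T)\times\T^d)$ enters. Neither is true. Proposition~\ref{compressibility estimate} concerns the discrete-time Euler flow $\Phi_E$, not $\Phi$. The compressibility of the regular Lagrangian flow $\Phi$ is built into Definition~\ref{rlf definition} and follows from the classical DiPerna--Lions/Ambrosio theory with the \emph{weaker} assumption $(\div u)^-\in L^1(0,T;L^\infty(\T^d))$, via the standard bound $L\leq\exp\bigl(\int_0^T\|(\div u)^-(s,\cdot)\|_\infty\,ds\bigr)$. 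The pointwise-in-time $L^\infty$ assumption is needed only to control $\det\nabla_x\Phi_E$ (where one multiplies $\approx 1/h$ factors and cannot invoke the exponential formula), which plays no role in the present Corollary since $\Phi_E$ does not appear here. The argument is otherwise sound; this is purely a citation error, not a gap, but it misrepresents which hypothesis is doing the work and should be corrected.
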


\begin{proof}
For the first inequality, use the $L^p$ estimate between regular Lagrangian flows in Proposition \ref{Lp stability estimate for regular Lagrangian flows}. For the second one notice that denoting with $B_1$ the ball centered in $0$ and of unitary radius, using Lemma \ref{pointwise bv inequality}:

    \begin{align*}
        &\left|u(t,x) -  u_\delta (t,x) \right| = \left| u(t,x) - \int_{\T^d} \eta_\delta (z) u (t, x+z) dz   \right|\\
        &= \left| u(t,x) - \int_{B_1} \eta(y) u(t, x+ \delta y) dy \right|  \leq \int_{B_1} \eta(y)|u(t,x) - u(t, x+ \delta y)| dy  \\
        &\lesssim \delta \int_{B_1} \eta(y) [M(|Du|)(t,x) + M(|Du|)(t, x+ \delta y) ] dy \nonumber \\ 
        & = \delta M(|Du|)(t,x) + \delta [\eta_\delta \ast M(|Du|) ](t,x).\nonumber
    \end{align*}

    Take the $L^p$ norm on both sides and  conclude using the Young convolution inequality and the boundedness of the maximal operator, and finally integrating in time we have

    \begin{align*}
     &\|u - u_\delta \|_{L^1 (0,t; L^p(\T^d))} \lesssim \delta \implies
    \log(\|u - u_\delta \|_{L^1 (0,t; L^p(\T^d))}) \lesssim \log(\delta)\\
    & \implies | \log(\|u - u_\delta \|_{L^1 (0,t; L^p(\T^d))}) | \gtrsim |\log(\delta)|\\
    &\implies  | \log(\|u - u_\delta \|_{L^1 (0,t; L^p(\T^d))} )|^{-1} \lesssim |\log(\delta)|^{-1}.
     \end{align*}
    \end{proof}

We now need to estimate $\|\Phi_\delta (t, \cdot)- \Phi_E (t, \cdot)\|_{L^p (\T^d)} $ uniformly in $t$, but since $\Phi_E$ is not properly a flow we cannot use the stability estimate in Proposition \ref{Lp stability estimate for regular Lagrangian flows}. We need the following Propositions as a key tool to treat the case $1 < p \leq d$. As we will see later, this will not be needed if $p>d$.

\begin{proposition}{\textbf{(Compressibility estimate for $\Phi_E$)}}\label{compressibility estimate}

Consider $\Phi_E$ defined as in \eqref{euler flow definition equation} with $\delta = \sqrt{\Delta t}$. Then $\exists c>0$ not dependent on $\Delta t$ such that $\det (\nabla_x \Phi_E (t,x)) >c$ for every $(t,x) \in [0,T] \times \T^d$.
\end{proposition}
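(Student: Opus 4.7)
The plan is to exploit the multiplicative (chain-rule) structure of $\nabla_x \Phi_E$ that arises from the slab-by-slab construction of the Euler approximation. On each interval $[t_n,t_{n+1}]$ the definition \eqref{euler flow definition equation} reads $\Phi_E(t,\cdot)=T_{t,n}\circ \Phi_E(t_n,\cdot)$, where $T_{t,n}(y)\coloneqq y+(t-t_n)v_n(y)$ and $v_n(y)\coloneqq \fint_{t_n}^{t_{n+1}} u_\delta(s,y)\,ds$. By the chain rule
$$ \nabla_x \Phi_E(t,x)=\bigl[I+(t-t_n)\nabla v_n(\Phi_E(t_n,x))\bigr]\,\nabla_x \Phi_E(t_n,x), $$
and iterating from $t_0=0$ expresses $\det \nabla_x \Phi_E(t,x)$ as a product of Jacobian determinants of the form $\det(I+h\nabla v_k(\Phi_E(t_k,x)))$ for $k<n$, times one partial step. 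The whole task reduces to bounding that product from below, uniformly in $x$ and in $h$.

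The decisive pointwise input is the bound on $\nabla u_\delta$. Writing $\nabla u_\delta=u*\nabla\eta_\delta$ and using $u\in L^\infty((0,T)\times\T^d)$ together with $\|\nabla\eta_\delta\|_{L^1}\lesssim \delta^{-1}$ gives $\|\nabla u_\delta(s,\cdot)\|_\infty\le C\delta^{-1}\|u\|_\infty$, so $\|\nabla v_n\|_\infty\lesssim 1/\sqrt h$ under the scaling $\delta=\sqrt h$. In particular $\|h\nabla v_n\|_{\operatorname{op}}\lesssim \sqrt h$, which is less than $1/2$ once $h$ is small. For such a matrix the logarithmic expansion $\log\det(I+A)=\operatorname{tr}(A)+O(|A|^2)$ is valid, so on each slab
$$ \log\det\bigl(I+h\nabla v_n(\Phi_E(t_n,x))\bigr)=h\,\div v_n(\Phi_E(t_n,x))+O\bigl(h^2\|\nabla v_n\|_\infty^2\bigr). $$
Summing over $n=0,\dots,M-1$ yields
$$ \log\det \nabla_x \Phi_E(T,x)=\sum_{n=0}^{M-1}h\,\div v_n(\Phi_E(t_n,x))+O\!\Bigl(\sum_{n=0}^{M-1}h^2\|\nabla v_n\|_\infty^2\Bigr). $$
For the linear part one uses the hypothesis $\div(u)^-\in L^\infty$: since $\div u_\delta=\eta_\delta*\div u\ge -\|\div(u)^-\|_\infty$ pointwise, one has $\div v_n\ge -\|\div(u)^-\|_\infty$ and hence $\sum_n h\,\div v_n(\Phi_E(t_n,x))\ge -T\|\div(u)^-\|_\infty$. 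For the quadratic remainder, the bound $\|\nabla v_n\|_\infty^2\lesssim 1/h$ together with $M\sim T/h$ slabs gives $\sum_n h^2\|\nabla v_n\|_\infty^2\lesssim (T/h)\cdot h^2\cdot(1/h)=T$. Together these estimates show $\log\det \nabla_x \Phi_E(T,x)\ge -C$ with $C$ independent of $h$ and $x$, so $\det \nabla_x \Phi_E(T,x)\ge e^{-C}\eqqcolon c>0$. At an intermediate time $t\in(t_n,t_{n+1})$ the additional factor $\det(I+(t-t_n)\nabla v_n)$ has size $\ge 1/2$, so the same uniform lower bound holds on all of $[0,T]\times\T^d$.

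The main obstacle is the quadratic remainder $\sum_n h^2|\nabla v_n|^2$. A priori a mollified $W^{1,p}$ field has $\|\nabla u_\delta\|_\infty$ of order $\delta^{-d/p}$, which for $p\le d$ would make that sum diverge; the rescue is the $L^\infty$ bound on $u$ itself, which trades $\delta^{-d/p}$ for the much better $\delta^{-1}$, exactly matching the choice $\delta=\sqrt h$. The same calculation also explains why $\delta=\sqrt h$ is the natural borderline scaling: a smaller $\delta$ would blow up the remainder, while a larger $\delta$ would eventually impair the regularization error in Corollary \ref{Error of regularisation}. I also expect a minor bookkeeping issue in justifying the $C^1$ regularity of each iterate $\Phi_E(t_n,\cdot)$, which follows by induction from the smoothness of $v_n=\fint u_\delta\,ds$ in its spatial argument.
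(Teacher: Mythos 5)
Your proposal is correct and follows essentially the same route as the paper: factor $\nabla_x \Phi_E$ slab-by-slab into a product of Jacobians, use the fact that $u_\delta$ is $O(\delta^{-1})=O(h^{-1/2})$-Lipschitz (a consequence of $u\in L^\infty$ and the choice $\delta=\sqrt h$) to control the higher-order terms, and use $\div(u)^-\in L^\infty$ to control the linear trace term. The only cosmetic difference is that the paper expands $\det(I+hA)=1+h\operatorname{tr}(A)+\sum_{j\ge 2}h^j\sigma_j(\lambda)$ and multiplies the $\approx 1/h$ factors directly, whereas you pass to $\log\det(I+hA)=h\div v_n+O(h^2\|\nabla v_n\|_\infty^2)$ and sum; these are equivalent, and your version also spells out explicitly (via $\nabla u_\delta=u*\nabla\eta_\delta$) why the Lipschitz bound of $u_\delta$ is $\delta^{-1}$ rather than the worse $\delta^{-d/p}$, which the paper uses without comment.
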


\begin{proof}

    Define $\Phi_{n} (t,x)$ in $[0,\Delta t] \times \T^d$ as

    $$ \Phi_{n} (t,x) \coloneqq x + t \fint _{t_n} ^{t_{n+1}} u_\delta (s, x) ds.$$

    Notice that $\Phi_E$ can be seen as a composition of such $\Phi_{n}$'s, i.e. if $ t \in [t_n , t_{n+1}]$:

    $$\Phi_E (t,x)= \Phi_{n} (t-t_n, \Phi_{n-1} (\Delta t, \Phi_{n-2} (\dots , \Phi_{0} (\Delta t,x))) \dots ). $$

    Differentiating in space:

    $$ \nabla_x \Phi_E (t,x)= \nabla_x \Phi_{n} (t-t_n , \dots) \nabla_x \Phi_{n-1} (\Delta t, \dots) \dots \nabla_x \Phi_{0} (\Delta t, x),$$
    
    and by Binet:

    $$  \det[\nabla_x \Phi_E (t,x)]= \underbrace{\det[\nabla_x \Phi_{n} (t-t_n , \dots)] \det[\nabla_x \Phi_{n-1} (\Delta t, \dots)] \dots \det[\nabla_x \Phi_{0} (\Delta t, x)]}_{\approx 1/\Delta t \text{ times}}.$$
  
    We will uniformly bound $\det [ \nabla_x \Phi_{0}]$ from below, but the argument is the same for every $\Phi_j$ (the only difference in the definition is the time interval). We have

    \begin{align*} 
    \det\left(\nabla _x \Phi_{0} (\Delta t,x) \right) &= \det \left(I + \Delta t \fint_{0} ^{\Delta t} \nabla_x u_\delta (s,x) ds \right)\\
    &= \det (I + \Delta t A), \; \text{ where } A \coloneqq \fint_{0} ^{\Delta t} \nabla_x u_\delta (s,x) ds.
    \end{align*}

    It is known (see \cite[Chapitre I ,section 3 , equations (4) and (6)]{grothendieck1956theorie}) that

    $$ \det (I+ \Delta t A)= 1 + \Delta t [tr (A)] + \sum_{j=2} ^{d} (\Delta t)^j \sigma_j (\lambda_1 , \dots , \lambda_d),$$

    where $\sigma_j$ are symmetric polynomials of degree $j$ and $\lambda_i$ are the eigenvalues of $A$. By our definition each entry of the matrix $A$ is bounded by $\approx 1/ \sqrt{\Delta t}$ since $u_\delta$ is $1/ \sqrt{\Delta t}$-Lipschitz, and thus $|\lambda_i| \lesssim 1/ \sqrt{\Delta t}$ for every $i$. Then, $|\sigma_j (\lambda_1 , \dots, \lambda_d)| \lesssim (\Delta t)^{-j/2}$. Since we also have that $tr(A) ^- = \div(u_\delta) ^- \in L^\infty ((0,T) \times \T^d) $,  there exists $C>0$ such that:

    \begin{equation}\label{lower bound determinant on one of the factors} \det (I+ \Delta t A) \geq 1 - \Delta t [\div(u) ^{-}] - \sum_{j=2} ^{d} (\Delta t)^j (\Delta t)^{-j/2} \geq 1 - C\Delta t.
    \end{equation}

    Multiplying $\approx 1/\Delta t$ times such determinants we can estimate $\det (\nabla_x \Phi_E)$ uniformly from below, and the proof is complete. 
\end{proof}

\begin{remark}
    In the following we will always assume $\delta = \sqrt{\Delta t}$. In view of the proof of Proposition \ref{compressibility estimate} it is clear that other choices are possible. However, as we will see in Theorem \ref{logarithmic wasserstein theorem}, the choice of $\delta= \sqrt{\Delta t}$ will be crucial for obtaining a $O(\sqrt{\Delta t})$ rate of convergence if we consider a logarithmic Wasserstein distance, assuming that the space discretization scale $\Delta x$ is equal to $\Delta t$.
\end{remark}

\begin{proposition}{\textbf{(Injectivity and Lipschitzianity of $\Phi_E$)}}\label{Lipschitz and injective euler flow}

Let $\Phi_E$ be defined as in \eqref{euler flow definition equation} and let $\delta= \sqrt{\Delta t}$. Then $\exists C_{ \Delta t,T} >0$ such that

$$ C_{\Delta t, T}^{-1} |x-y| \leq |\Phi_E (t,x) - \Phi_E (t,y)| \leq C_{\Delta t, T} |x-y| \text { for all } t \in [0,T]. $$

 In particular, $\Phi_E (t, \cdot) $ is Lipschitz continuous and injective. 

\end{proposition}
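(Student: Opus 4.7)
The plan is to exploit the compositional structure of $\Phi_E$ already used in the proof of Proposition \ref{compressibility estimate}. Writing, for $n = 0,\dots,M-1$ and $t \in [0,h]$,
$$\Psi_n(t,x) := x + t \fint_{t_n}^{t_{n+1}} u_\delta(s,x)\,ds,$$
one has for $t \in [t_n, t_{n+1}]$
$$\Phi_E(t,\cdot) = \Psi_n(t-t_n,\cdot)\circ \Psi_{n-1}(h,\cdot)\circ \cdots \circ \Psi_0(h,\cdot).$$
Since bi-Lipschitz constants multiply under composition, it suffices to produce a uniform bi-Lipschitz estimate for each $\Psi_n(t,\cdot)$ with $t \in [0,h]$, and then raise it to the number of factors, which is at most $M = T/h$.

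For the bi-Lipschitz bound on each $\Psi_n$, I would use that $u_\delta = \eta_\delta * u$ with $\delta = \sqrt{h}$ is smooth in space with
$$\|\nabla_x u_\delta(s,\cdot)\|_{L^\infty} \le \|\nabla \eta_\delta\|_{L^1}\,\|u(s,\cdot)\|_{L^\infty} \le \frac{C_\eta}{\sqrt{h}}\,\|u\|_{L^\infty((0,T)\times\T^d)},$$
using Young's convolution inequality and the bound $u \in L^\infty((0,T)\times\T^d)$ from \eqref{main hypotheses}. Hence the map $x \mapsto t \fint_{t_n}^{t_{n+1}} u_\delta(s,x)\,ds$ is Lipschitz with constant at most $t \cdot C/\sqrt{h} \le C\sqrt{h}$ for $t \in [0,h]$. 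The triangle inequality then yields, for every $x,y \in \T^d$ and $t \in [0,h]$,
$$(1-C\sqrt{h})|x-y| \le |\Psi_n(t,x)-\Psi_n(t,y)| \le (1+C\sqrt{h})|x-y|.$$

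Composing the at most $M = T/h$ maps $\Psi_n$ one obtains, for every $t \in [0,T]$,
$$(1-C\sqrt{h})^{M}|x-y| \le |\Phi_E(t,x)-\Phi_E(t,y)| \le (1+C\sqrt{h})^{M}|x-y|,$$
so that, provided $h$ is small enough that $C\sqrt{h} < 1/2$, the constant
$$C_{h,T} := (1-C\sqrt{h})^{-M} \vee (1+C\sqrt{h})^{M} \le \exp(2CT/\sqrt{h})$$
does the job; for $h$ bounded away from $0$ the claim is trivial since $M$ is then bounded. In particular $\Phi_E(t,\cdot)$ is Lipschitz and, from the lower bound, injective.

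I do not foresee a serious obstacle: the argument is essentially an elementary perturbation-of-the-identity computation for each $\Psi_n$. The one delicate point is that the lower constant $(1-C\sqrt{h})^M$ degenerates as $h \to 0$, but this is consistent with the statement, which only requires $C_{h,T}$ to be finite for each fixed pair $(h,T)$; the coupling $\delta = \sqrt{h}$ is exactly what makes each factor a $(1 \pm C\sqrt{h})$-perturbation of the identity and lets the composition of order $1/h$ factors remain bi-Lipschitz.
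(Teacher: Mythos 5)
Your proof is correct and follows essentially the same route as the paper: show that $u_\delta$ is Lipschitz with constant of order $1/\sqrt h$ (the paper states this directly; you justify it via $\nabla_x u_\delta = (\nabla\eta_\delta)*u$ and Young's inequality), deduce that each one-step map is a $(1\pm C\sqrt h)$-perturbation of the identity, and iterate $T/h$ times to get a bi-Lipschitz constant of the form $\exp(CT/\sqrt h)$. Your version is simply a more fully spelled-out rendering of the paper's terse argument; the compositional decomposition you invoke is exactly the one used in the proof of Proposition~\ref{compressibility estimate}.
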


\begin{proof}

  Consider two points $x, y \in \T^d$, with $|x - y| =\ell$. At the first step in the definition of $\Phi_E$ (i.e. up to time $t=\Delta t$) we have, since $u_\delta$ is $1/ \sqrt{\Delta t}$-Lipschitz:

        \begin{align*}
         &|\Phi_E (\Delta t, x) - \Phi_E (\Delta t, y)| \leq |x - y | +  \int_0 ^{\Delta t }|u_\delta (s, x) - u_\delta (s, y)| ds \lesssim \ell (1+\sqrt{\Delta t}).
    \end{align*}

    Iterating $T/\Delta t$ times we get a Lipschitz constant $C_{\Delta t,T} =\exp(T/\sqrt{\Delta t})$, and with the same argument we get an estimate from below with $ C_{\Delta t , T} ^{-1} =\exp(-T/\sqrt{\Delta t})$. 
\end{proof}

We are now ready to prove the following Proposition, whose proof is strongly inspired by the proof of \cite[Lemma 6]{schlichting2018analysis}:

\begin{proposition}{\textbf{($L^p$ stability estimate for $\Phi_\delta$ and $\Phi_E$)}}\label{proposition Lp estimate regularised rlf and euler flow}

Let $\Phi_E$ be defined as in \eqref{euler flow definition equation} with $u$ satisfying assumptions \eqref{main hypotheses}, and let $\delta = \sqrt{\Delta t}$. Then:

$$ \|\Phi_\delta (t,\cdot) - \Phi_E (t,\cdot)\|_{L^p (\T^d)} \lesssim C_t |\log(\Delta t)|^{-1} \text{ for every } t \in [0,T],$$

where $C_t \to 0$ as $t \to 0$ and $\Phi_\delta$ is the flow of $u_\delta$. If $u \in L^1 (0,T; W^{1,p} (\T^d )) $ with $p>d$, then

$$ \|\Phi(t, \cdot) - \Phi_E (t, \cdot)\|_{L^p (\T^d)} \lesssim C_t |\log(\Delta t)|^{-1} \text{ for every } t \in [0,T],$$

where $C_t \to 0 $ as $t \to 0$ and $\Phi_E$ is defined as in \eqref{euler flow definition equation p>d}. Moreover, if $p>d$ the conclusion holds even if we only require $\div (u) ^{-} \in L^1 (0,T; L^\infty (\T^d))$.
\end{proposition}

\begin{proof}

    In the first part of the proof we will work without loss of generality with $\Phi_\delta$ with the assumption $1 <p \leq d$, since the argument is identical if $p>d$. The difference will appear only at the end of the proof, where we differentiate the two cases. We want to prove that

    \begin{equation}\label{Lp norm of logarithm} \sup_{t \in [0,T]} \left | \left | \log \left( 1 + \frac{|\Phi_\delta (t,\cdot)- \Phi_E (t,\cdot)|}{\Delta t} \right) \right| \right|_{L^p (\T^d)} \leq C_t,
    \end{equation}

    with a constant $C_t$ not dependent on $\Delta t$ such that $C_t \to 0$ as $t \to 0$. Once we prove that, the conclusion follows as in Proposition \ref{Lp stability estimate for regular Lagrangian flows}. Notice that, by the concavity of the logarithm and by the fact that $\log(1+x)$ is an increasing monotone function, for every $a,b \geq 0$ it holds

    $$ \log (1+ b)- \log(1+a) \leq \frac{d}{dx}  \Big[\log(1+x) \Big]_{x=a} |b-a|= \frac{|b-a|}{1+a}.$$

   In particular, let $t=t_n$ for simplicity. We have (since the estimate is pointwise we omit the dependence on $x$ to keep the notation as light as possible):

    \begin{align*} 
    &\log \left( 1 + \frac{|\Phi_\delta (t_n)- \Phi_E (t_n)|}{\Delta t} \right) - \log \left( 1 + \frac{|\Phi_\delta (t_{n-1})- \Phi_E (t_{n-1})|}{\Delta t} \right) \\
    &\leq \frac{\Delta t}{|\Phi_\delta (t_{n-1}) - \Phi_E (t_{n-1})| + \Delta t } \frac{|\;|\Phi_\delta (t_n)- \Phi_E (t_n)| - |\Phi_\delta (t_{n-1}) - \Phi_E (t_{n-1})| \;|}{\Delta t}  \\
    &\leq \frac{|\Phi_\delta (t_n) - \Phi_\delta (t_{n-1}) -( \Phi_E (t_n) - \Phi_E (t_{n-1})) |}{|\Phi_\delta (t_{n-1}) - \Phi_E (t_{n-1}) | + \Delta t} \\
    &= \frac{|\int_{t_{n-1}} ^{t_n} u_\delta (s, \Phi_\delta (s)) - u_\delta (s, \Phi_E (t_{n-1})) ds |}{|\Phi_\delta (t_{n-1}) - \Phi_E (t_{n-1}) | + \Delta t}.
    \end{align*}

    Iterating the estimate above up to $t_0=0$ we get by telescopic summing:

    \allowdisplaybreaks
    \begin{align}\label{logarithmic estimate}
    &\log \left( 1 + \frac{|\Phi_\delta (t_n)- \Phi_E (t_n))|}{\Delta t} \right) \leq \sum_{j=0} ^{n-1} \frac{|\int_{t_{j}} ^{t_{j+1}} u_\delta (s, \Phi_\delta (s)) - u_\delta (s, \Phi_E (t_j)) ds|}{|\Phi_\delta (t_j) - \Phi_E (t_j)|  + \Delta t}   \\
    &\leq   \sum_{j=0} ^{n-1} \frac{|\int_{t_{j}} ^{t_{j+1}} u_\delta (s, \Phi_\delta (s)) - u_\delta (s, \Phi_\delta (t_j)) ds| + |\int_{t_{j}} ^{t_{j+1}} u_\delta (s, \Phi_\delta (t_j)) - u_\delta (s, \Phi_E (t_j)) ds|}{|\Phi_\delta (t_j) - \Phi_E (t_j)|  + \Delta t}  \nonumber\\
    &\leq \underbrace{\sum_{j=0} ^{n-1} \frac{\int_{t_{j}} ^{t_{j+1}} |u_\delta (s, \Phi_\delta (s)) - u_\delta (s, \Phi_\delta (t_j))| ds }{|\Phi_\delta (t_j) - \Phi_E (t_j)|  + \Delta t}}_{= \circled{1}} + \underbrace{\sum_{j=0} ^{n-1} \frac{\int_{t_{j}} ^{t_{j+1}} |u_\delta (s, \Phi_\delta (t_j)) - u_\delta (s, \Phi_E (t_j))| ds}{|\Phi_\delta (t_j) - \Phi_E (t_j)|  + \Delta t}}_{= \circled{2}} .\nonumber 
    \end{align}

Now, notice that by Lemma \ref{pointwise bv inequality}:

\begin{align*}
&\int_{t_{j}} ^{t_{j+1}} |u_\delta (s, \Phi_\delta (s)) - u_\delta (s, \Phi_\delta (t_j))| ds  \\
& \lesssim \int_{t_{j}} ^{t_{j+1}} |\Phi_\delta (s)- \Phi_\delta (t_j)| \;[M(|Du_\delta |)(s, \Phi_\delta (s)) + M(|Du_\delta |)(s,\Phi_\delta (t_j))] ds \\
&=  \int_{t_{j}} ^{t_{j+1}} \left|\int_{t_j} ^s u_\delta (s, \Phi_\delta (s)) ds \right| [M(|Du_\delta |)(s, \Phi_\delta (s)) + M(|Du_\delta |)(s,\Phi_\delta (t_j))] ds \\
&\leq \|u_\delta \|_\infty \Delta t  \int_{t_j} ^{t_{j+1}} [M(|Du_\delta |)(s, \Phi_\delta (s)) + M(|Du_\delta |)(s,\Phi_\delta (t_j))] ds.
\end{align*}

Then, since $\Delta t/ (|\Phi_\delta (t_j) - \Phi_E (t_j)|  + \Delta t) \leq 1$, we have

\begin{align*} 
\circled{1} &\lesssim \sum_{j=0} ^{n-1} \int_{t_{j}} ^{t_{j+1}} [M(|Du_\delta |)(s, \Phi_\delta (s)) + M(|Du_\delta |)(s,\Phi_\delta (t_j))] ds\\
&= \int_{0} ^{T} M(|Du|)(s, \Phi_\delta (s))ds + \sum_{j=0} ^{n-1} \int_{t_j} ^{t_{j+1}} M(|Du_\delta |)(s,\Phi_\delta (t_j)) ds.
\end{align*}

 Considering the $L^p$ norm:

\begin{align}\label{I don't know how to call you}
&\left| \left|\int_{0} ^{T} M(|Du|)(s, \Phi_\delta (s))ds + \sum_{j=0} ^{n-1} \int_{t_j} ^{t_{j+1}} M(|Du_\delta |)(s,\Phi_\delta (t_j)) ds \right| \right|_{L^p (\T^d)}   \\
& \leq \int_{0} ^{T} \|M(|Du_\delta |)(s, \Phi_\delta (s))\|_{L^p (\T^d)} ds +  \sum_{j=0} ^{n-1} \int_{t_j} ^{t_{j+1}} \|M(|Du_\delta |)(s,\Phi_\delta (t_j))\|_{L^p (\T^d)} ds  \nonumber \\
&\lesssim \|M(D u_\delta)\|_{L^1 (0,T;L^p (\T^d))} \lesssim \|Du\|_{L^1 (0,T; L^p (\T^d))}, \nonumber
\end{align}

where in the last inequalities we used the $L^\infty$ bound on the compressibility to change variables in each addendum, the boundedness of the maximal operator and Young's convolution inequality. As for $\circled{2}$, we need to differentiate the case $1 < p \leq d$ and $p>d$.

\begin{itemize}
    \item If $1<p \leq d$, we repeat the argument above. We have

    \begin{align*}
    &\int_{t_{j}} ^{t_{j+1}} |u_\delta (s, \Phi_\delta (t_j)) - u_\delta (s, \Phi_E (t_j))| ds \\
    &\lesssim |\Phi _h (t_j) - \Phi_E (t_j)|  \int_{t_j} ^{t_{j+1}} [M(|Du_\delta |)(s, \Phi_\delta (t_j)) + M(|Du_\delta |)(s,\Phi_E (t_j))] ds,
    \end{align*}

    and we can estimate $|\Phi _h (t_j) - \Phi_E (t_j)|/(|\Phi _h (t_j) - \Phi_E (t_j)| + \Delta t) \leq 1$. Then, after summing over $j$:

    $$ \circled{2} \lesssim \sum_{j=0} ^{n-1}  \int_{t_j} ^{t_{j+1}} [M(|Du_\delta |)(s, \Phi_\delta (t_j)) + M(|Du_\delta |)(s,\Phi_E (t_j))] ds.  $$
    
   After considering the $L^p$ norm the estimate goes as in \eqref{I don't know how to call you}. For the second addendum, notice that by Proposition \ref{Lipschitz and injective euler flow} the map $\Phi_E$ is Lipschitz and injective (so we are allowed to change variables in the usual way) and by Proposition \ref{compressibility estimate} the compressibility is controlled, thus the change of variables does not cause any problem as in \eqref{I don't know how to call you} when we were dealing with $\Phi_\delta$, and we can conclude.

    \item If $p >d$, recall that we work directly with $\Phi$, and not with $\Phi_\delta$. In this case we use Lemma \ref{lemma Caravenna Crippa} with $\tilde{p} \in (d, p)$ to get:

\begin{align} \label{estimate for p>d in the approximation}
\frac{\int_{t_{j}} ^{t_{j+1}} |u (s, \Phi (t_j)) - u (s, \Phi_E (t_j)) |ds}{|\Phi (t_j) - \Phi_E (t_j)|  + \Delta t} &\lesssim \frac{|\Phi (t_j) - \Phi_E (t_j)| \int_{t_{j}} ^{t_{j+1}} f(s, \Phi (t_j)) ds|}{|\Phi(t_j) - \Phi_E (t_j)|  + \Delta t} \\
&\leq \int_{t_j} ^{t_{j+1}} f (s, \Phi(t_j)) ds , \nonumber 
\end{align}

where 

\begin{equation}\label{definition of f for the estimate in p>d}
f (s, x) \coloneqq [M(|Du |^{\tilde{p}}) (s,x)]^{1/\tilde{p}} \in L^1 (0,T;L^{\tilde{p}} (\T^d)).
\end{equation}
 To conclude we just need to sum over $j$, consider the $L^p$ norm and use the compressibility estimate on $\Phi$ to change variables. Notice that $\tilde{p} \in (d, p)$ is needed to have $|Du_\delta (s, \cdot)| \in L^{p/\tilde{p}} (\T^d)$ with $p/\tilde{p} >1$ in order to have boundedness of the maximal operator. Notice also that in this case we do not need any compressibility estimate for $\Phi_E$ since the argument of $f$ is $\Phi(t_j)$, whose compressibility is controlled by the hypothesis on the divergence even assuming only $\div (u)^- \in L^1 (0,T; L^\infty (\T^d))$, as we assumed in \eqref{main hypotheses}.

\end{itemize}

 At this point, we can conclude the proof by arguing as in Proposition \ref{Lp stability estimate for regular Lagrangian flows} with $\Delta t$ in place of $\delta$. 
\end{proof}

We are ready to prove Theorem \ref{theorem on estimate between real rlf and euler flow}, whose proof is now trivial:

\begin{proof}[Proof of Theorem \ref{theorem on estimate between real rlf and euler flow}]
    Combine Proposition \ref{Error of regularisation} and Proposition \ref{proposition Lp estimate regularised rlf and euler flow} if $1 < p \leq d$. If $p>d$, we already proved this result in Proposition \ref{proposition Lp estimate regularised rlf and euler flow}.
\end{proof}

\section{Singular probabilistic approximation of $\rho$}

 The goal of this section is to use the results from Section 2 to approximate, in a suitable sense that we will specify shortly, the solution $\rho$ of \eqref{continuity equation}. Let us state a standard result, whose proof we put in the Appendix for the sake of completeness, which will be crucial in order to exploit Theorem \ref{theorem on estimate between real rlf and euler flow} for approximating $\rho$.

\begin{lemma}\label{lemma soluzione cont eqn è push forward via rlf}{\textbf{}}

     Consider a vector field $u$ and an initial density $\rho_0$ satisfying \eqref{main hypotheses}.  Let $\Phi$ be the regular Lagrangian flow of $u$. Then 
      
      $$ \rho(t,\cdot ) \coloneqq  \Phi(t, \cdot )_\# \rho_0 $$ 
      
      is the solution of \eqref{continuity equation}. 
 \end{lemma}

As we mentioned in the introduction the idea is to exploit the Lagrangian representation $\rho (t, \cdot) = \Phi(t, \cdot) _\# \rho_0$ of the solution of \eqref{continuity equation} by constructing a vector field $\Phi_E$ and a measure $\mu$ which approximate $\Phi$ and $\rho_0$ respectively, so that $\rho_E (t) \coloneqq \Phi_E (t, \cdot) _\# \mu$ approximates $\rho$ in some sense. Before stating the main result of this section, let us give the definition of Wasserstein distance. For the sake of simplicity we state this definition and some other results in the metric space $(\mathbb{R}^d , \d)$ rather than $(\T^d, \d)$, but analogous results hold also in the latter case.

\begin{definition}{\textbf{(Wasserstein distance)}}\label{wasserstein distance definition}

Consider $\mu, \nu$ two positive measures in $(\R^d, \d)$, with $\d$ a distance, such that $\mu(\R^d)= \nu(\R^d)$. We define the Wasserstein distance between $\mu $ and $\nu$ as 

\begin{equation}\label{wasserstein distance equation definition}
\W (\mu , \nu) \coloneqq \inf_{\pi \in \Gamma (\mu, \nu)} \int_{\R^d} \int_{\R^d} \operatorname{d}(x,y) d \pi(x,y)  
\end{equation}

where $\Gamma$ is the set of measures on $\R^d \times \R^d$ whose marginals are $\mu $ and $\nu$. If in particular $\d(x,y)= |x-y|$, we denote with $\W_1$ the so called $1$-Wasserstein distance:

\begin{equation} \label{1-Wasserstein distance equation definition}
\W_1 (\mu , \nu) \coloneqq \inf_{\pi \in \Gamma (\mu, \nu)} \int_{\R^d} \int_{\R^d} |x-y| d \pi(x,y) .
\end{equation}

We also define a logarithmic Wasserstein distance with respect to the distance

\begin{equation}\label{distance d_alpha}
\d_{\alpha} (x,y) \coloneqq  \log\left( 1+ \frac{|x-y|}{h^\alpha}\right) \quad \text{for some } \alpha \in [0,1] \text{ and } h>0,
\end{equation}

which we will denote as $\widetilde{\W}_{\alpha}$. We will never explicitly write the dependence of $\d_{\alpha}$ and $\widetilde{\W}_\alpha$ on $h$ in order to have a lighter notation.
\end{definition}

We will not use the above definition, but its dual formulation given by the Kantorovich-Rubinstein duality theorem:

\begin{theorem}{\textbf{(Kantorovich-Rubinstein duality theorem, \cite{villani2021topics})}}

Let $\mu , \nu$ be positive measures with compact support on the metric space $(\R^d, d)$ and with the same mass. Then, it holds that

$$ \inf_{\pi \in \Gamma (\mu, \nu)} \int_{\R^d} \int_{\R^d} \d(x,y) d \pi (x,y) = \sup_{\operatorname{Lip}(f) \leq 1} \int_{\R^d } f(x) d (\mu - \nu )(x),$$

where $f$ is $1$-Lipschitz with respect to the distance $\d$.
\end{theorem}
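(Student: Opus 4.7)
My plan is to prove the two inequalities $\inf \geq \sup$ and $\inf \leq \sup$ separately, following the standard route in optimal transport theory.

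For the easy direction $\inf \geq \sup$: given any $\pi \in \Gamma(\mu, \nu)$ and any $f$ with $\operatorname{Lip}(f) \leq 1$, the marginal property of $\pi$ gives
$$\int_{\R^d} f \,\d(\mu - \nu) = \int_{\R^d \times \R^d} [f(x) - f(y)]\, \d\pi(x,y) \leq \int_{\R^d \times \R^d} \d(x,y)\, \d\pi(x,y),$$
using $|f(x) - f(y)| \leq \d(x,y)$. Taking the supremum over $f$ on the left and the infimum over $\pi$ on the right yields this inequality.

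For the hard direction $\inf \leq \sup$: the plan is to apply the general Kantorovich duality
$$\inf_{\pi \in \Gamma(\mu,\nu)} \int c\, \d\pi = \sup_{\phi(x)+\psi(y)\leq c(x,y)} \left[ \int \phi\, \d\mu + \int \psi\, \d\nu \right]$$
with cost $c(x,y) = \d(x,y)$; this applies because $c$ is continuous and bounded on $\operatorname{supp}(\mu) \times \operatorname{supp}(\nu)$, which is compact. The dual is then reduced to a maximization over $1$-Lipschitz functions via the $c$-transform. Given an admissible pair $(\phi, \psi)$, I would define
$$\phi^c(y) := \inf_{x \in \R^d} [\d(x,y) - \phi(x)], \qquad \phi^{cc}(x) := \inf_{y \in \R^d} [\d(x,y) - \phi^c(y)].$$
Since $\psi(y) \leq \d(x,y) - \phi(x)$ for all $x$ implies $\psi \leq \phi^c$, and analogously $\phi \leq \phi^{cc}$, the replacement $(\phi,\psi) \mapsto (\phi^{cc}, \phi^c)$ only increases the dual functional. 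The triangle inequality for $\d$ then shows that both $\phi^c$ and $\phi^{cc}$ are $1$-Lipschitz as infima of $1$-Lipschitz families. Finally, setting $x=y$ in the admissibility constraint $\phi^{cc}(x) + \phi^c(y) \leq \d(x,y)$ gives $\phi^c(y) \leq -\phi^{cc}(y)$, and the reverse inequality follows by a symmetric argument, forcing $\phi^c = -\phi^{cc}$. Setting $f := \phi^{cc}$ shows that the optimal dual value is attained on pairs of the form $(f, -f)$ with $\operatorname{Lip}(f)\leq 1$, which yields exactly $\int f\, \d(\mu - \nu)$.

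The main obstacle is the general Kantorovich duality itself, which is typically proved via a Hahn--Banach separation argument on $C(\operatorname{supp}(\mu) \times \operatorname{supp}(\nu))$ (or equivalently via a minimax theorem applied to the natural bilinear form $(\pi,f) \mapsto \int f\,\d\pi$). The compactness hypothesis on the supports is crucial: it guarantees that all $c$-transforms encountered above are bounded and $1$-Lipschitz, sidestepping the measurability and integrability technicalities that arise in the noncompact setting. Once these tools are in place, the $c$-transform manipulation is purely algebraic and is carried out in detail in the reference cited right after the statement.
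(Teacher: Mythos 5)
The paper does not prove this theorem; it is stated as a known background result and attributed to \cite{villani2021topics}, so there is no ``paper's own proof'' to compare against. Your sketch reproduces the standard textbook route one would find in the cited reference: the easy inequality via the marginal constraints, and the hard one via general Kantorovich duality followed by a $c$-transform reduction to a single $1$-Lipschitz potential. The overall structure is correct.

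One step is misattributed, though. You obtain $\phi^c(y) \leq -\phi^{cc}(y)$ by setting $x=y$ in the admissibility constraint, and then claim the reverse inequality ``follows by a symmetric argument.'' It does not: applying the same $x=y$ substitution to the defining infimum of $\phi^{cc}$ yields exactly the same one-sided bound $\phi^{cc}(x)+\phi^c(x) \leq 0$ again, not its reverse. The correct source of the reverse inequality is the $1$-Lipschitz bound on $\phi^c$ that you established two lines earlier: indeed $\phi^{cc}(x)+\phi^c(x) \geq 0$ is equivalent to $\phi^c(y) - \phi^c(x) \leq \d(x,y)$ for all $y$, which is precisely $\operatorname{Lip}(\phi^c) \leq 1$. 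Equivalently, one can invoke the general identity $g^c = -g$ for any $1$-Lipschitz $g$ and apply it to $g = \phi^c$. So the ingredients for the conclusion $\phi^{cc} = -\phi^c$ are already present in your argument; it is only the stated justification that needs to be repaired. A smaller technical point: the $c$-transforms should be taken over the (compact) supports rather than all of $\R^d$, otherwise the infimum in $\phi^c$ is not obviously finite for an arbitrary admissible $\phi$. With these two adjustments the sketch is sound.
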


We can now state the two main results of this section, which we decided to present in two separate Theorems: one for the $1$-Wasserstein distance and the other for the logarithmic one. The reason is that a single statement would have been too long, and while the proofs of both theorems are very similar we believe both of them deserve to be presented in detail. Indeed while on the one hand the $1$-Wasserstein distance is far more commonly used than the logarithmic one, on the other hand the latter is more suited to precisely capture the rate of convergence of the method, as we will see. We recall that the first time such a logarithmic distance was used (at least in the context of the continuity equation in the DiPerna-Lions setting) is in \cite{seis2017quantitative} and that in \cite{schlichting2017convergence,schlichting2018analysis} the authors use such a logarithmic Wasserstein distance to evaluate the rate of convergence of their approximation method, so we decided to use the same distance to have a clearer comparison. We also remark that, for the sake of simplicity, in this section we will restrict ourselves to the case $\rho_0 \in L^\infty (\T^d)$ and non negative. The non negativity assumption is clearly not a problem: we can always split the initial datum as in its positive and negative parts. We will explain how to deal with the unbounded case in Remark \ref{The case rho0 not bounded}. As a final note, we point out that we are forced to distinguish between bounded and unbounded initial densities due to our choice of a uniform probability $\bar{\P}$, which will play a crucial role in Theorem \ref{1 wasserstein theorem} and Theorem \ref{logarithmic wasserstein theorem}. With a different choice of $\bar{\P}$ this issue can be fixed and one does not need to distinguish the two cases. We refer to Remark \ref{remark on unbounded case with ad hoc probability} for a more detailed discussion on the matter. Let us now state the two main results of this section:

\begin{theorem}\textbf{(Singular probabilistic approximation of $\rho$ in $1$-Wasserstein distance)}\label{1 wasserstein theorem}

Assume that $u$ satisfies \eqref{main hypotheses} and $\rho_0 \geq 0$ and bounded, and let $\rho$ be the solution of \eqref{continuity equation}. Assume also that the domain $\T^d$ is partitioned into sets $\{Q_i \}_{i=1} ^N$ such that $\operatorname{diam} (Q_i) \leq \Delta x \; \forall i=1, \dots , N$ for some space discretization scale $\Delta x >0$. Choose, for every set $Q_i$, a random starting point $x_0 ^i \in Q_i$ according to a uniform probability distribution on $Q_i$, and define $M_i \coloneqq \int_{Q_i} \rho_0 dx $. For $t \in [0,T]$ consider the measure given by:

\begin{equation} \label{definition of rho_E}
\rho_E (t,\bar{x}_0) \coloneqq  \sum_{i=1} ^{N} M_i \delta_{\Phi_E (t,x_0 ^i)} = \Phi_E (t, \cdot) _\# \left( \sum_{i=1} ^N M_i\delta_{x_0 ^i}  \right),
\end{equation}

where $\bar{x}_0 = (x_0 ^1 , x_0 ^2 , \dots, x_0 ^N)$ and $\Phi_E$ is defined as in $\eqref{euler flow definition equation}$ with a time step $\Delta t >0$ and with $\delta = \sqrt{\Delta t}$ if $1 < p \leq d$ or as in \eqref{euler flow definition equation p>d} otherwise. Denoting with $\W_1$ the $1$-Wasserstein distance defined in \eqref{1-Wasserstein distance equation definition} it holds that, in expected value, for every $\alpha \in (0,1)$:

\begin{align}\label{expected value desired estimate}
\E[\W_1 (\rho(t,\cdot) , \rho_E (t, \bar{x}_0))] &\lesssim  C_t|\log(\Delta t)| ^{-1} + (\Delta x)^{1- \alpha} +  t \alpha ^{-p} |\log(\Delta x)|^{-p} , 
\end{align}

with $C_t \to 0$ as $t \to 0$. We can also estimate the variance of $\W_1 (\rho(t,\cdot), \rho_E (t, \bar{x}_0))$ if $N \lesssim (\Delta x)^{-d}$. In this case:

\begin{align}\label{variance inequality}
\operatorname{Var}[\W_1 (\rho(t,\cdot), \rho_E (t, \bar{x}_0))] \lesssim C_t ^2 |\log(\Delta t)|^{-2}  + (\Delta x)^{2- 2\alpha} & + t C_t |\log(\Delta t)|^{-1}\\
&+ t^2 \alpha ^{-p}|\log(\Delta x)|^{-p}.\nonumber 
\end{align}

\end{theorem}

\vspace{2em}

\begin{theorem}{\textbf{(Singular probabilistic approximation of $\rho$ in logarithmic Wasserstein distance)}}\label{logarithmic wasserstein theorem}

Consider the same setting of Theorem \ref{1 wasserstein theorem}. It holds that, given the Wasserstein distance $\widetilde{\W}_\alpha$ defined in Definition \ref{wasserstein distance definition} with $h \coloneqq \max\{ \Delta t , \Delta x\}$ and assuming $\alpha= 1/2$, if $1<p \leq d$:

    \begin{equation}\label{bounded expected value}
    \E[\widetilde{\W}_{1/2} (\rho(t, \cdot), \rho_E (t, \bar{x}_0))] \lesssim 1
    \end{equation}

    and if $N \lesssim (\Delta x) ^{-d}$, we also can estimate the variance as

    \begin{equation}\label{logarithmic variance}
    \operatorname{Var}[\widetilde{\W}_{1/2} (\rho(t, \cdot), \rho_E (t, \bar{x}_0))] \lesssim |\log(h)|.
    \end{equation}

    If $p>d$, it holds that for any $\alpha \in (0,1)$:
    
\begin{equation}\label{logarithmic expected value for p>d}
      \E[\widetilde{\W}_{1- \alpha} (\rho(t, \cdot), \rho_E (t, \bar{x}_0))] \lesssim 1 + \frac{|\log(h)|^{1-p}}{\alpha ^p} 
      \end{equation}
    
    and if $N \lesssim (\Delta x) ^{-d}$:
    \begin{equation}\label{logarithmic variance for p>d}
        \operatorname{Var}[\widetilde{\W}_{1- \alpha} (\rho(t, \cdot), \rho_E (t, \bar{x}_0))] \lesssim  |\log(h)| + \frac{|\log(h)|^{2-p}}{\alpha^p} .
    \end{equation}
   
     If moreover $p\geq 2$, we have that:

    \begin{equation}\label{bounded variance}
   \operatorname{Var}[\widetilde{\W}_{1/2} (\rho(t, \cdot), \rho_E (t, \bar{x}_0))] \lesssim 1
    \end{equation}
    if $1 <p \leq d$, and 
    
    \begin{equation}\label{bounded variance p>d}
        \operatorname{Var}[\widetilde{\W}_{1- \alpha} (\rho(t, \cdot), \rho_E (t, \bar{x}_0))] \lesssim 1 + \frac{|\log(h)|^{2-p}}{\alpha ^p} 
    \end{equation} 
    
     if $p>d$.
    
\end{theorem}

Before proving Theorem \ref{1 wasserstein theorem} and Theorem \ref{logarithmic wasserstein theorem}, we will need the following:

\begin{lemma}\label{splitting wasserstein}{\text{}}

    Let $\mu, \nu$ be two positive measures with compact support on $\R^d$ and such that $\mu(\R^d)= \nu(\R^d)$. Let $\mu_i , \nu_i$ with $i= 1, \dots, N$ be positive measures such that

    $$ \mu= \sum_{i=1} ^N \mu_i, \quad  \nu= \sum_{i=1} ^{N} \nu_i \quad \text{and }  M_i=\mu_i (\R^d) = \nu_i (\R^d) \text{ for every } i.$$

    Define also the probability measures
    
    $$ \bar{\mu}_i \coloneqq \mu_i /M_i \text{ and } \bar{\nu}_i \coloneqq \nu_i /M_i.$$

    Then

    \begin{equation}\label{wasserstein inequality}
    \W (\mu, \nu) \leq \sum_{i=1} ^N \W (\mu_i , \nu_i) = \sum_{i=1} ^N M_i \W (\bar{\mu}_i , \bar{\nu}_i) .
    \end{equation}

\end{lemma}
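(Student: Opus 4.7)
The plan is to glue together optimal (or near-optimal) transport plans, one for each index $i$, into a single coupling between $\mu$ and $\nu$. This is the natural move because the mass of $\mu_i$ matches that of $\nu_i$, so each piece can be transported independently without any need for mass exchange between different indices, and no mass can be saved by ``mixing'' the indices.

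Concretely, for each $i=1,\dots,N$ I would invoke existence of an optimal plan (which holds since all measures have compact support and $\d$ is continuous) to select $\pi_i \in \Gamma(\mu_i,\nu_i)$ achieving $\int \d(x,y)\,d\pi_i(x,y) = \W(\mu_i,\nu_i)$. Setting $\pi \coloneqq \sum_{i=1}^N \pi_i$, one checks that $\pi \in \Gamma(\mu,\nu)$: for any Borel $A \subset \R^d$,
$$\pi(A \times \R^d) = \sum_{i=1}^N \pi_i(A \times \R^d) = \sum_{i=1}^N \mu_i(A) = \mu(A),$$
and analogously for the second marginal. Hence
$$\W(\mu,\nu) \leq \int_{\R^d \times \R^d} \d(x,y)\,d\pi(x,y) = \sum_{i=1}^N \int_{\R^d \times \R^d} \d(x,y)\,d\pi_i(x,y) = \sum_{i=1}^N \W(\mu_i,\nu_i),$$
which is the first inequality in \eqref{wasserstein inequality}.

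For the equality $\W(\mu_i,\nu_i) = M_i\,\W(\bar{\mu}_i,\bar{\nu}_i)$, I would observe that the map $\pi \mapsto \pi/M_i$ is a bijection between $\Gamma(\mu_i,\nu_i)$ and $\Gamma(\bar{\mu}_i,\bar{\nu}_i)$, since the marginals scale by the same factor $M_i$. Linearity of the integral gives $\int \d\,d\pi = M_i \int \d\,d(\pi/M_i)$, and taking the infimum on both sides yields the claimed identity.

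There is no serious obstacle here; if one wanted to avoid invoking existence of optimal plans, the same argument works after choosing each $\pi_i$ to be $\varepsilon/N$-optimal and letting $\varepsilon \to 0$.
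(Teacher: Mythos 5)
Your proof is correct and follows essentially the same approach as the paper: glue transport plans between the pieces $\mu_i,\nu_i$ into a single coupling of $\mu$ and $\nu$, verify the marginals, and use linearity of the cost integral, with the scaling identity handled via the $\pi\mapsto\pi/M_i$ bijection. The only cosmetic difference is that the paper works with $\varepsilon/N$-optimal plans and lets $\varepsilon\to0$, a variant you already note at the end.
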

\begin{proof}
    First of all let us prove that for every $i$ it holds 
    
    $$ \W (\mu_i , \nu_i) = M \W_1 (\bar{\mu}_i , \bar{\nu}_i).$$

     By definition:

    $$  \W (\bar{\mu}_i , \bar{\nu}_i )=  \inf_{\pi \in \Gamma (\bar{\mu}_i, \bar{\nu}_i)} \int_{\R^d \times \R^d} \d(x,y) d \pi (x,y) $$

    and given a sequence of measures $\pi_n$ tending to the infimum in \eqref{1-Wasserstein distance equation definition} with marginals $\bar{\mu}_i$ and $\bar{\nu}_i$, $M_i \pi_n$ will tend to the infimum in \eqref{1-Wasserstein distance equation definition} with marginals $\mu_i, \nu_i$, so

    $$ \W_1 (\mu_i , \nu_i) = M_i \W_1 (\bar{\mu}_i, \bar{\nu}_i). $$
    
    For every $i$ and every $\varepsilon>0$ we consider $\pi_i$ a transport plan between $ \bar{\mu}_i$ and $\bar{\nu}_i$ such that

    $$ \int_{\R^d \times \R^d} \d(x,y) d\pi_i (x,y) \leq \W (\bar{\mu}_i , \bar{\nu}_i) + \varepsilon/N .$$

     Then it is easy to see that $\pi^\varepsilon \coloneqq \sum_{i=1} ^N M_i \pi_i \in \Gamma (\mu, \nu)$, since for every $A, B$ Borel:

    $$ \pi^\varepsilon (A \times \R^d) = \sum_{i=1} ^{N} M_i \pi_i (A \times \R^d) = \sum_{i=1} ^N M_i \bar{\mu}_i (A ) =\sum_{i=1} ^N  \mu_i (A) =\mu (A),$$

    and similarly we see that $\pi^\varepsilon (\R^d \times B)= \nu (B)$. Then:

    \begin{align*} 
    &\W (\mu, \nu) \leq \int_{\R^d  \times \R^d} \d(x,y) d \pi ^\varepsilon (x,y) = \sum_{i=1 } ^N M_i \int_{\R^d \times \R^d} \d(x,y) d \pi_i (x,y) \\
    & \leq \sum_{i =1} ^{ N} M_i \W (\bar{\mu}_i , \bar{\nu}_i) + \varepsilon = \sum_{i =1} ^{ N} \W (\mu_i , \nu_i) + \varepsilon.
    \end{align*}

    We conclude since $\varepsilon$ is arbitrarily small.
\end{proof}

\begin{remark}
The above lemma has an heuristic obvious ``proof''. What we are doing is simply to impose that the mass of $\mu_i$ is sent to $\nu_i$ for every $i$, and we minimize this cost for every $i=1 ,\dots, N$. Of course by doing this we are putting more constraints on $\pi \in \Gamma (\mu,\nu)$, so the inequality is just a consequence of the fact that we are considering the infimum in \eqref{wasserstein distance equation definition} on a smaller set than $\Gamma (\mu, \nu)$.
\end{remark}

We are now ready to prove the theorems.

\begin{proof}[Proof of Theorem \ref{1 wasserstein theorem}]

First of all, we notice that in order to prove \eqref{expected value desired estimate} we can work without loss of generality with $\Phi_\delta$ instead of $\Phi$ if $1< p \leq d$. By Proposition \ref{Error of regularisation}, using Kantorovich-Rubinstein duality, we have:

\begin{align}\label{can work wlog with Phi_delta}
     &\W_1 (\Phi(t,\cdot) _\# \rho_0 , \Phi_\delta (t,\cdot) _\# \rho _0 ) \leq \int_{\T^d} \rho_0 (x)|\Phi (t,x) - \Phi_\delta (t,x)| dx  \\
     &\leq \|\rho_0 \|_{L^q} \|\Phi (t, \cdot) - \Phi_\delta (t, \cdot)\|_{L^p} \lesssim C_t |\log(\delta)|^{-1} \approx C_t  |\log(\Delta t )|^{-1}. \nonumber 
\end{align}

At time $t$ fixed, consider $\bar{x}_0  =(x_0 ^1 , \dots, x_0 ^N)$ as a random variable in $((\T^d)^N, \mathcal{B} (\T^d) ^N , \bar{P})$, with $\mathcal{B} (\T^d)$ the Borel sigma algebra on $\T^d$,

\begin{equation}\label{definition of the probability} \bar{\P} = \P_1 \otimes \P_2 \dots \otimes \P_N, \text{ with } \P_i \text{ being a uniform distribution over $Q_i$.}
\end{equation}

We also define $\rho_0 ^i \coloneqq \rho_0 \measurerestr Q_i$ and $\bar{\rho}_0 ^i \coloneqq (\rho_0 \measurerestr Q_i)/M_i$ if $M_i >0$, otherwise $M_i = \bar{\rho}_0 ^i =0$, where $\mu \measurerestr A$ denotes the restriction of a measure $\mu$ to the set $A$. In the following, each time we consider an expected value $\E[ \cdot] $ or a variance $\operatorname{Var}[ \cdot]$ it will be with respect to the probability $\bar{\P}$. We consider

$$ \mu_i \coloneqq  \Phi_\delta (t, \cdot) _\# (\rho_0 \measurerestr Q_i) \text{ and } \nu_i \coloneqq M_i \delta_{\Phi_E (t, x_0 ^i)}= \Phi_E (t, \cdot) _\# (M_i \delta_{x_0 ^i}),$$

 which have the same mass, so we can apply Lemma \ref{splitting wasserstein}. For the moment we will work in a fixed set $Q=Q_i$ of the partition such that $M_i \neq 0$, so in order to keep the notation lighter we will drop the subscript and in the following we will implicitly assume that the measures are restricted to $Q$ and normalized, also we will write $x_0$ instead of $x_0 ^i$. Since $\rho(t,\cdot)  = \Phi_\delta(t,\cdot)_\# \bar{\rho}_0$, we need to estimate $\E [\W_1 (\Phi_\delta (t,\cdot)_\#  \bar{\rho}_0 , \delta_{\Phi_E (t,x_0)})]$. By triangle inequality 

\begin{align}\label{expected value equation}
\E [\W_1 (\Phi_\delta (t,\cdot)_\# \bar{\rho}_0  , \delta_{\Phi_E (t,x_0)})] \leq &\E [\W_1 (\Phi_\delta (t,\cdot)_\# \bar{\rho}_0  , \delta_{\Phi_\delta (t,x_0)})] \\
&+  \E [ \W_1 (\delta_{\Phi_\delta (t,x_0)} , \delta_{\Phi_E (t,x_0)})].\nonumber 
\end{align} 

For the first addendum in \eqref{expected value equation}, recalling that $\int_Q \bar{\rho}_0 (x) dx=1$:

\begin{align}\label{1-wasserstein estimate}
&\W_1 (\Phi_\delta (t,\cdot) _\# \bar{\rho}_0  , \delta_{\Phi_\delta(t,x_0)}) \\
&=\sup_{ \operatorname{Lip}(h) \leq 1}  \left(\int_Q \bar{\rho}_0 (x) h (\Phi_\delta(t,x)) dx - \int_Q \bar{\rho}_0 (x) h (\Phi_\delta(t,x_0)) dx \right) \nonumber  \\
&\leq \int_Q \bar{\rho}_0 (x)  |\Phi_\delta(t,x)- \Phi_\delta(t,x_0)| dx. \nonumber
\end{align}

Using Lemma \ref{Lishitz rlf} we can consider a set $K \subset \T^d$ with $|K^c|=  \alpha^{-p} c_d ^p A_p (R,\Phi_\delta) ^p |\log(\Delta x)|^{-p} $ for $\alpha \in (0,1)$ such that $\operatorname{Lip} (\Phi_\delta(t, \cdot) \measurerestr K) \leq (\Delta x)^{- \alpha}$ for every $0 \leq t \leq T$. Notice that $A_p (R, \Phi_\delta)$ does not depend on the mollification scale $\delta$, since $\|D_x u_\delta \|_{L^1 (L^p)} \leq \|D_x u\|_{L^1 (L^p)}$ for every $\delta >0$ by Young's inequality. Then using that $|x- x_0 | \leq \operatorname{diam} (Q) \leq \Delta x$ and \eqref{1-wasserstein estimate}:

\begingroup
\allowdisplaybreaks
\begin{align}\label{1 wasserstein estimate first addendum}
    &\E\left[\W_1 (\Phi_\delta(t,\cdot)_\# \bar{\rho}_0 , \delta_{\Phi_\delta(t,x_0)
    })\right] \\
    &\leq \frac{1}{\Le^d (Q)}\left[ \int_{Q \cap K} \left( \int_{Q \cap K} \bar{\rho}_0 (x) |\Phi_\delta(t,x)- \Phi_\delta(t,x_0)| dx\right) dx_0  \right.\nonumber \\
    & + \int_{Q \cap K} \left(\int_{Q \cap K^c} \bar{\rho}_0 (x) |\Phi_\delta(t,x)- \Phi_\delta(t,x_0)| dx \right) dx_0  \nonumber   \\
    &+ \left. \int_{Q \cap K ^c}  \left(\int_{Q} \bar{\rho}_0 (x) |\Phi_\delta(t,x)- \Phi_\delta(t,x_0)| dx \right) dx_0 \right] \nonumber  \\
    & \lesssim   (\Delta x)^{1- \alpha} \|\bar{\rho}_0\|_{L^1 (Q \cap K)}  +  (\Delta x+ 2t\|u\|_\infty) \;  \left(\|\bar{\rho}_0 \|_{L^1 (Q \cap K^c)} + \frac{\Le^d (Q \cap K^c)}{\Le^d (Q)} \right), \nonumber 
\end{align}
\endgroup

where we used that 

$$ |\Phi_\delta(t, x) - \Phi_E (t, x_0)| \leq \Delta x + 2t\|u\|_{\infty} \text{ and that } \|\bar{\rho}_0 \|_{L^1 (Q)} =1 \text{ by construction.}$$

 As for the other addendum in \eqref{expected value equation}, we have:

\begin{align} \label{Wasserstein distance between dirac deltas}
\W_1 (\delta_{\Phi_\delta (t,x_0)}, \delta_{\Phi_E (t,x_0)})&=  \sup_{\operatorname{Lip}(h)=1}  \left(\int_{\T^d} h (x) d \delta_{\Phi_\delta (t,x_0)} - \int_{\T^d} h(x) d \delta_{\Phi_E (t,x_0)} \right)\\
&\leq  |\Phi_\delta(t,x_0) - \Phi_E (t,x_0)|. \nonumber
\end{align}

So, considering the expected value:

\begin{align} \label{expected value second addendum}
&\E [\W_1 (\delta_{\Phi_\delta (t,x_0)}, \delta_{\Phi_E (t,x_0)})] \leq  \frac{1}{\Le ^d (Q)}\int_Q  |\Phi_\delta(t,x_0) - \Phi_E (t,x_0)| dx_0.
\end{align}

Then using Lemma \ref{splitting wasserstein}:

\begingroup
\allowdisplaybreaks
\begin{align}\label{expected value inequality} 
&\E[\W_1 (\Phi_\delta (t, \cdot) _\# \rho_0 , \rho_E (t,\bar{x}_0))] \leq \E \left[ \sum_{i=1} ^{N} M_i \W_1 \left(\Phi_\delta (t,\cdot)_\# \bar{\rho}_0 ^i , \delta_{\Phi_E (t,x)} \right)\right] \\
&=\sum_{i=1} ^{N} M_i \E \left[\W_1 \left(\Phi_\delta (t,\cdot)_\# \bar{\rho}_0 ^i, \delta_{\Phi_E (t,x)} \right)\right]  \nonumber  \\
&\lesssim \sum_{i=1} ^N  M_i 
\begin{aligned}[t]
& \left( \vphantom{\int} (\Delta x)^{1- \alpha} \|\bar{\rho}_0 ^i\|_{L^1 (Q_i \cap K)}  \right. + (\Delta x + 2t\|u\|_\infty) \|\bar{\rho}_0 ^i \|_{L^1 (Q_i \cap K^c)} \\
&+(\Delta x + 2 t \|u\|_\infty ) \frac{\Le^d (Q \cap K^c)}{\Le^d (Q)} + \left. \frac{1}{\Le^d (Q_i)}\int_{Q_i} |\Phi_E (t,x) - \Phi_\delta(t,x)|dx \right)
\end{aligned} 
\nonumber \\
&= \sum_{i=1} ^N   
\begin{aligned}[t] 
&\left( \vphantom{\int} (\Delta x)^{1- \alpha} \|\rho_0 ^i \|_{L^1 (Q_i \cap K)} \right.+ (\Delta x + 2t\|u\|_\infty) \|\rho_0 ^i \|_{L^1 (Q_i \cap K^c)} \\
&+ \|\rho_0 \|_\infty (\Delta x + 2t\|u\|_\infty) \Le ^d (Q_i \cap K^c) \left. + \|\rho_0\|_{\infty}  \int_{Q_i} |\Phi_E (t,x) - \Phi_\delta(t,x)|dx \right)
\end{aligned}\nonumber \\
& \lesssim (\Delta x)^{1- \alpha} \|\rho_0\|_{L^1 (\T^d)}  + t \|\rho_0\|_{L^1 (K^c)} + t \|\rho_0\|_\infty \Le ^d (K^c) + \|\rho_0\|_{\infty} \|\Phi_\delta(t,x) - \Phi_E (t,x)\|_{L^1 (\T^d)}  \nonumber \\
&\lesssim (\Delta x)^{1- \alpha}  + \|\rho_0\|_{\infty} \left(t  \frac{|\log(\Delta x)|^{-p}}{\alpha ^p}  +  C_t |\log(\Delta t)|^{-1}  \right), \nonumber 
\end{align}
\endgroup

where we use that $\|\rho_0 \|_{L^1 (K^c)} \leq \|\rho_0\|_{\infty} \Le^d (K^c) $ with $\Le^d (K^c) \approx \alpha ^{-p} |\log(\Delta x)|^{-p}$. We also have

\begin{equation}\label{Lp final estimate needed for rlf}\|\Phi_\delta(t,x) - \Phi_E (t,x)\|_{L^1 (\T^d)} \lesssim C_t |\log(\Delta t)|^{-1}
\end{equation}

by H\"older inequality and Proposition \ref{proposition Lp estimate regularised rlf and euler flow}, and thus we have proved \eqref{expected value desired estimate}. We now estimate the variance. By definition

$$ \operatorname{Var}[X] = \E[X^2] - \E[X] ^2 .$$

We will simply estimate $\operatorname{Var}(X)$ from above with $\E[X^2]$. First of all we verify that, in order to prove \eqref{variance inequality}, if $1 < p \leq d$ we can work without loss of generality with $\Phi_\delta$ instead of $\Phi$. Indeed:

\begin{equation}\label{can use Phi_delta instead of Phi}
    \W_1 (\Phi (t,\cdot)_\# \bar{\rho}_0  , \rho_E (t, \bar{x}_0))^2 \leq 2 \W_1 (\Phi (t,\cdot)_\# \bar{\rho}_0  , \Phi_\delta (t,\cdot)_\# \bar{\rho}_0 )^2 + 2\W_1 (\Phi_\delta  (t,\cdot)_\# \bar{\rho}_0  , \rho_E (t, \bar{x}_0))^2,
\end{equation}

and by \eqref{can work wlog with Phi_delta} we have 

\begin{equation}\label{can work with Phi_delta for the variance}
\W_1 (\Phi (t,\cdot)_\# \bar{\rho}_0  , \Phi_\delta (t,\cdot)_\# \bar{\rho}_0 )^2 \lesssim C_t ^2 |\log(\Delta t)|^{-2}.
\end{equation}

We now restrict to working in a fixed set $Q_i$. As we did before, we omit writing the index $i$ and we will denote $(\rho_0 \measurerestr Q_i )/M_i$ and $x_0 ^i$ as $\bar{\rho}_0$ and $x_0$. It holds:

\begin{align}\label{(a+b)^2 <}
\W_1 (\Phi_\delta (t,\cdot)_\# \bar{\rho}_0  , \delta_{\Phi_E (t,x_0)})^2  \leq 2 \W_1 (\Phi_\delta (t,\cdot)_\# \bar{\rho}_0  ,\delta_{\Phi_\delta (t,x_0)})^2 + 2 \W_1 (\delta_{\Phi_\delta (t,x_0)} , \delta_{\Phi_E (t,x_0)})^2. 
\end{align}

For the first addendum in the right hand side of \eqref{(a+b)^2 <}, using \eqref{1-wasserstein estimate}, the same choice of $K$, the decomposition in \eqref{1 wasserstein estimate first addendum} and the elementary inequality $(a+ b + c) ^2 \leq 4a^2 + 4b^2 + 4 c^2$:

\begin{align}\label{expected value 1-wasserstein estimate squared}
     \E[\W_1 (\Phi_\delta(t,\cdot)_\# \bar{\rho}_0 , \delta_{\Phi_\delta(t,x_0)
    })^2]  \lesssim (\Delta x)^{2- 2 \alpha} \|\bar{\rho}_0 \|_{L^1 (Q \cap K)} ^2 &+ (\Delta x + 2 \|u\|_\infty t)^2 \|\bar{\rho}_0\|_{L^1 (Q \cap K^c)} ^2  \\
    &+ (\Delta x + 2 \|u\|_\infty t)^2 \frac{\Le ^d (Q \cap K^c) ^2}{\Le (Q)^2}. \nonumber
\end{align}

For the second addendum in \eqref{(a+b)^2 <}, using \eqref{Wasserstein distance between dirac deltas} in expected value:

\begin{align} \label{devo usarti sotto e non so come chiamarti}
&\E [\W_1 (\delta_{\Phi_\delta (t,x_0)}, \delta_{\Phi_E (t,x_0)})^2] \leq \frac{1}{\Le ^d (Q)}\int_Q|\Phi_\delta(t,x_0) - \Phi_E (t,x_0)|^2 dx_0 \\
&\leq  \frac{\|\Phi_\delta (t, \cdot)- \Phi_E (t,\cdot) \|_\infty}{\Le^d (Q)} \int_Q  |\Phi_\delta (t, x_0)- \Phi_E (t, x_0) | dx_0 \nonumber \\  
&\leq \frac{2\|u\|_\infty t}{\Le^d (Q)} \int_Q |\Phi_\delta (t, x_0)- \Phi_E (t, x_0) | dx_0 .\nonumber 
\end{align}

The conclusion follows as in \eqref{wasserstein inequality}, using \eqref{expected value 1-wasserstein estimate squared}, \eqref{devo usarti sotto e non so come chiamarti} and that by Cauchy-Schwartz: 

$$ \W_1 (\mu, \nu)  \leq \sum_{i=1} ^N M_i  \W_1 (\bar{\mu}_i, \bar{\nu}_i)  \implies \W_1 (\mu, \nu)^2 \leq N \sum_{i=1} ^N M_i ^2 \W_1 (\bar{\mu}_i , \bar{\nu}_i )^2 .  $$

Indeed 

\allowdisplaybreaks

\begin{align}\label{variance sum estimate}
    &\E[\W_1 (\Phi (t, \cdot) _\# \rho_0 , \rho_E (t,\bar{x}_0)) ^2] \leq  N \sum_{i=1} ^{N} M_i ^2 \E \left[\W_1 \left(\Phi (t,\cdot)_\# \bar{\rho}_0 ^i, \delta_{\Phi_E (t,x)} \right) ^2\right]  \\
    &\lesssim N \sum_{i=1} ^N  M_i ^2 
    \begin{aligned}[t] 
    &\left( \vphantom{\int} (\Delta x)^{2- 2 \alpha} \|\bar{\rho}_0 ^i \|_{L^1 (Q_i \cap K)} ^2 \right. +  (\Delta x+t)^2\|\bar{\rho}_0 ^i \|_{L^1 (Q_i \cap K^c)}^2 \\
    &+(\Delta x+t)^2 \frac{\Le ^d (Q_i \cap K^c)^2}{\Le ^d (Q_i)^2 } + \left. \frac{t}{\Le^d(Q_i)} \int_{Q_i}  |\Phi_E (t,x) - \Phi_\delta(t,x)| dx \right)
    \end{aligned} \nonumber \\
    & \leq   N \sum_{i=1} ^N   M_i 
    \begin{aligned}[t]
    & \left( \vphantom{\int} (\Delta x)^{2- 2\alpha} \|\rho_0 ^i\|_{L^1 (Q_i )} \right. + (2(\Delta x)^2+2t^2) \|\rho_0 ^i \|_{L^1 (Q_i \cap K^c)} \\
&+(2(\Delta x)^2 + 2t^2)\|\rho_0\|_\infty \Le(K^c \cap Q_i) +\left. t \|\rho_0\|_{\infty}\int_{Q_i} |\Phi_E (t,x) - \Phi_\delta(t,x)| dx \right).
    \end{aligned}\nonumber \\
    &\leq   N \sum_{i=1} ^N   M_i 
    \begin{aligned}[t]
    & \left( \vphantom{\int} (\Delta x)^{2- 2\alpha} \|\rho_0 ^i\|_{L^1 (Q_i )} \right. + (4(\Delta x)^2+4t^2) \|\rho_0\|_{\infty} \Le(K^c \cap Q_i) \\
&+\left. t \|\rho_0\|_{\infty} \int_{Q_i} |\Phi_E (t,x) - \Phi_\delta(t,x)| dx \right).
    \end{aligned}\nonumber 
\end{align}

By the assumption on $\operatorname{diam}(Q_i)$ we have $M_i \lesssim  \|\rho_0 \|_\infty (\Delta x)^d$ for every $i$, and also by assumption $N \lesssim (\Delta x)^{-d}$ , so 

\begin{align*} 
&\E[\W_1 (\Phi_\delta (t, \cdot) _\# \rho_0 , \rho_E (t,\bar{x}_0)) ^2] \\
&\begin{aligned}
\lesssim \|\rho_0\|_{\infty} \sum_{i=1} ^N \left( \vphantom{\int} (\Delta x)^{2- 2 \alpha} \|\rho_0 ^i\|_{L^1 (Q_i )} + t^2 \|\rho_0\|_{\infty} \Le ^d (K^c \cap Q_i) \right.\\
\left. + t \| \rho_0 \|_{L^\infty} \int_{Q_i}|\Phi_E (t,x) - \Phi_\delta(t,x)| dx \right)
\end{aligned}\\
&\begin{aligned}
    = \|\rho_0\|_{\infty} \left(\vphantom{\int} (\Delta x)^{2- 2 \alpha} \|\rho_0 \|_{L^1 (\T^d) } + t^2 \|\rho_0\|_{\infty} \Le^d (K^c)\right. \\
    \left.+ t \|\rho_0\|_{\infty}  \int_{\T^d}|\Phi_E (t,x) - \Phi_\delta(t,x)| dx \right)
    \end{aligned}\\
&\begin{aligned}
    \leq \|\rho_0\|_{\infty} \left( \vphantom{\int}(\Delta x)^{2- 2 \alpha} \|\rho_0 \|_{L^1 (\T^d) } + t^2 \|\rho_0\|_{\infty} \Le^d (K^c)\right. \\
    \left.+ t \|\rho_0\|_{\infty}\int_{\T^d} |\Phi_E (t,x) - \Phi_\delta(t,x)| dx\right) 
    \end{aligned}\\
&\lesssim \|\rho_0\|_{\infty} \left( (\Delta x)^{2- 2 \alpha} \|\rho_0 \|_{L^1 (\T^d) } + t^2 \|\rho_0\|_{\infty} \alpha ^{-p} |\log(\Delta x)|^{-p} + t C_t  \|\rho_0\|_{\infty} |\log(\Delta t)|^{-1} \right),
\end{align*}

and together with \eqref{can use Phi_delta instead of Phi} and \eqref{can work with Phi_delta for the variance} we can conclude and get \eqref{variance inequality}. 
\end{proof}

\begin{remark}
    A simple sufficient condition to impose in order to have that $N \lesssim (\Delta x)^{-d}$ is a lower bound on the volume of the cells, i.e. $|Q_i| \geq c (\Delta x)^d$ for some $c>0$ for every $i$. 
    \end{remark}

\begin{remark}[The role of $\alpha$]\label{remark explanation of alpha}  
    We will try to give an heuristic explanation of the role of $\alpha$. It essentially takes into account the fact that one cannot estimate $\rho(t, \cdot)$ better than a certain threshold even for small times, since we have a discretization of the initial datum at scale $\Delta x$. We can check that if $t=0$, we can consider $\alpha =1$ and estimate $\W_1 \left(\rho_0 , \sum_{i=1} ^N \delta_{x_0 ^i} \right) $ deterministically. Indeed, let $\bar{\rho}_0 ^i$ be the probability distribution in $Q_i$ defined in the statement of Theorem \ref{1 wasserstein theorem}. By Kantorovich-Rubinstein:
    
    \begin{align*}
    &\W_1 (\bar{\rho}_0 ^i , \delta_{x_0 ^i} )= \sup_{\operatorname{Lip} (f) \leq 1} \left(\int_{Q_i} f(x) \bar{\rho}_0 ^i  (x) dx - \int_{Q_i} f(x_0) \bar{\rho}_0 ^i (x) dx \right) \\
    &= \int_{Q_i} \bar{\rho}_0 ^i (x) |f(x)- f(x_0)| dx \leq \Delta x \int_{Q_i} \bar{\rho}_0 ^i (x) dx ,
    \end{align*}
    
    and we can easily conclude applying Lemma \ref{splitting wasserstein}.
\end{remark}

\begin{remark}[The role of the expected value]\label{remark that maybe estimates hold pointwise}
    If we did not consider any expected value in Theorem \ref{1 wasserstein theorem}, instead of \eqref{expected value second addendum} we would have had to estimate pointwise differences between $\Phi_\delta$ and $\Phi_E$, which we are unable to control. Indeed, while it is certainly possible to control in a pointwise sense the difference if we allow either $\Delta t$ or $\Delta x$ to be much smaller than the other, in the more interesting case where $\Delta t = \Delta x=h$ for some $h>0$, it is not clear if a pointwise estimate holds as $h \to 0$.
\end{remark}

Before proving Theorem \ref{logarithmic wasserstein theorem}, let us remark once again why it is more precise than Theorem \ref{1 wasserstein theorem}. While before we had for every $\alpha \in (0,1)$ essentially the same logarithmic rate, in this case defining $h \coloneqq \max\{\Delta t, \Delta x\}$ the optimal rate $h^\alpha$ for which we will be able to estimate both the expected value and the variance will be $1/2$ for $1< p \leq d$, while for $p>d$ we get better estimates. Namely, if $p>d$ we can estimate the $\widetilde{\W}_{1-\alpha}$-distance, but with constants that diverge to $+ \infty$ as $\alpha \to 0$. We thus do not have a sharp rate for this case. We notice that the convergence rate of $1/2$ is likely sharp if $1<p \leq d$ (as shown in \cite{schlichting2017convergence} for the upwind scheme).

\begin{proof}[Proof of Theorem \ref{logarithmic wasserstein theorem}]

    Since the relevant quantity will be $h$ in the following, we can assume without loss of generality that $h = \Delta t = \Delta x$. If $1< p \leq d$, we first need to estimate $ \widetilde{\W}_{1/2} (\Phi(t, \cdot)_\# \rho_0 , \Phi_\delta (t, \cdot)_\# \rho_0)$ since we want to work with $\Phi_\delta$ in lieu of $\Phi$ with $\delta = \sqrt{h}$. Notice that this will not be necessary if $p >d$. By Kantorovich-Rubinstein duality

    \begin{align}\label{wasserstein distance phi , phi_delta}
    \widetilde{\W}_{1/2} (\Phi(t, \cdot)_\# \rho_0 , \Phi_\delta (t, \cdot)_\# \rho_0) &\leq \int_{\T^d} \rho_0 (x) \log \left(1+ \frac{|\Phi (t,x) - \Phi_\delta (t,x)|}{\sqrt{h}}  \right) dx\\
    &\leq \|\rho_0 \|_{L^q (\T^d)} \left| \left| \log \left( 1 + \frac{|\Phi(t, \cdot)- \Phi_\delta (t, \cdot)|}{\sqrt{h}}\right)\right| \right|_{L^p (\T^d)} \leq C ,\nonumber 
    \end{align} 

where we have used the estimate of Remark \ref{remark on boundedness of logarithm Lp estimate with Lp norm of difference velocities}, since $\|u- u_\delta \|_{L^1 (L^p)} \lesssim \sqrt{h}$, because $\delta = \sqrt{h}$. Arguing as in Theorem \ref{1 wasserstein theorem} we restrict to working in a fixed set $Q_i$ and we will omit the index for the moment to lighten the notation, in particular $\bar{\rho}_0$, $x_0$ and $Q$  will denote $(\rho_0 \measurerestr Q_i) /M_i$, $x_0 ^i$ and $Q_i$ respectively. By triangle inequality:

\begin{align}\label{second theorem triangle inequality}
\E \left[\widetilde{\W}_{1/2} (\Phi_\delta (t,\cdot)_\# \bar{\rho}_0  , \delta_{\Phi_E (t,x_0)})\right] &\leq \E \left[\widetilde{\W}_{1/2} (\Phi_\delta (t,\cdot)_\# \bar{\rho}_0  , \delta_{\Phi_\delta (t,x_0)})\right] \\
&+  \E \left[ \widetilde{\W}_{1/2} (\delta_{\Phi_\delta (t,x_0)} , \delta_{\Phi_E (t,x_0)})\right].\nonumber 
\end{align} 

For the first addendum in the right hand side of \eqref{second theorem triangle inequality} it holds the analogous of \eqref{1-wasserstein estimate}, i.e. 

\begin{equation*} \widetilde{\W}_{1/2} (\Phi_\delta (t,\cdot)_\# \bar{\rho}_0  , \delta_{\Phi_\delta (t,x_0)}) \leq  \int_Q \bar{\rho}_0 (x)  \log \left(1 + \frac{|\Phi_\delta(t,x)- \Phi_\delta(t,x_0)|}{\sqrt{h}} \right) dx. \end{equation*}

By Lemma \ref{Lishitz rlf} we can choose $K \subset \T^d$ such that $|\T^d \setminus K|= 2^p c_d ^p A_p (R,\Phi_\delta)^p |\log(h)|^{-p} $ and $\operatorname{Lip} (\Phi_\delta (t, \cdot) \measurerestr K) \leq 1/\sqrt{h}$ for every $t$. Notice that $A_p (R, \Phi_\delta)$ does not depend on the mollification scale $\delta$, since $\|D_x u_\delta \|_{L^1 (L^p)} \leq \|D_x u\|_{L^1 (L^p)}$ for every $\delta >0$. Then it holds

\allowdisplaybreaks
\begin{align*}
    &\E[ \widetilde{\W}_{1/2} (\Phi_\delta (t,\cdot)_\# \bar{\rho}_0 , \delta_{\Phi_\delta (t,x_0)
    }) ] \\
    &\leq \frac{1}{\Le^d (Q)} \left[ \int_{Q \cap K}  \left( \int_{Q \cap K} \bar{\rho}_0 (x)\log \left( 1+ \frac{|\Phi_\delta(t,x)- \Phi_\delta(t,x_0)|}{\sqrt{h}} \right)dx\right) dx_0  \right.\\
    & + \int_{Q \cap K}  \left(\int_{Q \cap K^c} \bar{\rho}_0 (x)\log \left( 1+ \frac{|\Phi_\delta(t,x)- \Phi_\delta(t,x_0)|}{\sqrt{h}} \right)dx \right) dx_0   \\
    &+ \left.\int_{Q \cap K ^c}  \left(\int_{Q} \bar{\rho}_0 (x) \log \left( 1+ \frac{|\Phi_\delta(t,x)- \Phi_\delta(t,x_0)|}{\sqrt{h}} \right)dx \right) dx_0 \right] \\
    & \lesssim   \|\bar{\rho}_0\|_{L^1 (Q\cap K)} +  |\log(h)| \;  \|\bar{\rho}_0 \|_{L^1 (Q \cap K^c)} +|\log(h)| \frac{\Le ^d (Q \cap K^c)}{\Le^d (Q)},
\end{align*}

where in the first addendum we used the Lipschitz estimate on $\Phi_\delta (t, \cdot)\measurerestr K$, while for the second and third addenda we simply estimated the logarithm as $|\log(h)|$ and used that $\|\bar{\rho}_0\|_{L^1 (Q)} =1$. For the second addendum in \eqref{second theorem triangle inequality}, using the analogous of \eqref{Wasserstein distance between dirac deltas}:

\begin{align*} 
\E [\widetilde{\W}_{1/2} (\delta_{\Phi (t,x_0)}, \delta_{\Phi_E (t,x_0)})] &\leq \frac{1}{\Le ^d (Q)}\int_Q  \log \left( 1+ \frac{|\Phi_\delta (t,x_0) - \Phi_E (t,x_0)|}{\sqrt{h}}\right) dx_0.
\end{align*}

So, estimating as in \eqref{expected value inequality}:

\begin{align}\label{log Wasserstein distance phi delta rho e}
&\E[\widetilde{\W}_{1/2} (\Phi_\delta (t, \cdot) _\# \rho_0 , \rho_E (t,\bar{x}_0))] \leq \E \left[ \sum_{i=1} ^{N} M_i \widetilde{\W}_{1/2} \left(\Phi_\delta (t,\cdot)_\# \bar{\rho}_0 ^i , \delta_{\Phi_E (t,x)} \right)\right] \\
&=\sum_{i=1} ^{N} M_i \E \left[\widetilde{\W}_{1/2} \left(\Phi_\delta (t,\cdot)_\# \bar{\rho}_0 ^i, \delta_{\Phi_E (t,x)} \right)\right]  \nonumber  \\
&\lesssim \sum_{i=1} ^N  M_i
\begin{aligned}[t]
& \left( \vphantom{\log \left( 1+ \frac{|\Phi(t,x) - \Phi_E (t,x)|}{\sqrt{h}}\right)}\|\bar{\rho}_0 ^i\|_{L^1 (Q_i \cap K)}  \right. + |\log(h)|\;  \|\bar{\rho}_0 ^i \|_{L^1 (Q_i \cap K^c)} \\
& \left.+|\log(h)| \frac{\Le ^d (Q_i \cap K)}{\Le ^d (Q)}+  \frac{1}{\Le^d (Q_i)}\int_{Q_i}\log \left( 1+ \frac{|\Phi_\delta (t,x) - \Phi_E (t,x)|}{\sqrt{h}}\right)dx \right) 
\end{aligned}
\nonumber \\
&\leq \sum_{i=1} ^N   
\begin{aligned}[t] 
&\left( \vphantom{\log \left( 1+ \frac{|\Phi(t,x) - \Phi_E (t,x)|}{\sqrt{h}}\right)}\|\rho_0 ^i \|_{L^1 (Q_i \cap K)} \right. +|\log(h)| \; \|\rho_0\|_{\infty} \Le^d (Q_i \cap K^c)\\
&+ |\log(h)| \; \|\rho_0\|_{\infty} \Le ^d (Q_i \cap K^c) + \|\rho_0\|_{\infty} \left.\int_{Q_i}  \log \left( 1+ \frac{|\Phi_\delta (t,x) - \Phi_E (t,x)|}{\sqrt{h}}\right)dx \right) 
\end{aligned}\nonumber \\
&\lesssim    \|\rho_0\|_{L^1 (\T^d)} + |\log(h)| \;\|\rho_0\|_{\infty} \;\Le^d (K^c) + \|\rho_0\|_{\infty}  \int_{\T^d}  \log \left( 1+ \frac{|\Phi_\delta (t,x) - \Phi_E (t,x)|}{\sqrt{h}}\right)dx \nonumber \\
& \lesssim \|\rho_0\|_{L^1 (\T^d)} + \|\rho_0\|_{\infty} \left( |\log(h)|^{1-p} + 1 \right)  \leq C.\nonumber
\end{align}

Combining \eqref{wasserstein distance phi , phi_delta} and \eqref{log Wasserstein distance phi delta rho e} we get \eqref{bounded expected value}. With the same estimates used in Theorem \ref{1 wasserstein theorem}, we can also recover the estimate for the variance. First of all, let us see that if $1 < p \leq d$ we can work without loss of generality with $\Phi_\delta$ in lieu of $\Phi$. Indeed we can use \eqref{can use Phi_delta instead of Phi} and \eqref{wasserstein distance phi , phi_delta} with $\widetilde{\W}_{1/2}$  to get $\widetilde{\W}_{1/2} (\Phi(t,\cdot)_\# \bar{\rho}_0 ,\Phi_\delta(t,\cdot)_\# \bar{\rho}_0) ^2 \leq C$. We are left to estimate $\widetilde{\W}_{1/2} (\Phi_\delta (t, \cdot)_\# \rho_0 , \rho_E (t, \bar{x}_0))$ and by \eqref{(a+b)^2 <} we first estimate: 

\allowdisplaybreaks
\begin{align*}
    &\E[ \widetilde{\W}_{1/2} (\Phi_\delta (t,\cdot)_\# \bar{\rho}_0 , \delta_{\Phi_\delta (t,x_0)
    }) ^2] \\
    &\lesssim \frac{1}{\Le^d (Q)} \left[ \int_{Q \cap K}  \left( \int_{Q \cap K} \bar{\rho}_0 (x)\log \left( 1+ \frac{|\Phi_\delta(t,x)- \Phi_\delta(t,x_0)|}{\sqrt{h}} \right)dx\right)^2 dx_0  \right.\\
    & + \int_{Q \cap K}  \left(\int_{Q \cap K^c} \bar{\rho}_0 (x)\log \left( 1+ \frac{|\Phi_\delta(t,x)- \Phi_\delta(t,x_0)|}{\sqrt{h}} \right)dx \right)^2 dx_0   \\
    &+ \left.\int_{Q \cap K ^c}  \left(\int_{Q} \bar{\rho}_0 (x) \log \left( 1+ \frac{|\Phi_\delta(t,x)- \Phi_\delta(t,x_0)|}{\sqrt{h}} \right)dx \right)^2 dx_0 \right] \\
    & \lesssim   \|\bar{\rho}_0\|_{L^1 (Q\cap K)} +  |\log(h)|^2 \;  \|\bar{\rho}_0 \|_{L^1 (Q \cap K^c)} + |\log(h)|^2 \frac{\Le ^d (Q \cap K^c)}{\Le ^d (Q)},
\end{align*}

where in the first addendum we used the Lipschitz estimate on $\Phi_\delta (t, \cdot) \measurerestr K$ and in the last two addenda we simply estimated the logarithm as $|\log(h)|$. The other relevant estimate is:

\begin{align} \label{estimate square of log}
\E\left[\widetilde{\W}_{1/2} (\delta_{\Phi_\delta (t,x_0)} , \delta_{\Phi_E (t, x_0)} ) ^2 \right] &\leq \frac{1}{\Le^d(Q)} \int_Q   \log \left( 1+ \frac{|\Phi_\delta (t,x_0) - \Phi_E (t,x_0)|}{\sqrt{h}} \right)^2 dx_0\\
&\lesssim \frac{|\log(h)|}{\Le^d (Q)} \int_Q   \log \left( 1+ \frac{|\Phi_\delta (t,x_0) - \Phi_E (t,x_0)|}{\sqrt{h}} \right) dx_0.\nonumber 
\end{align}

Summing over $j$ as in \eqref{variance sum estimate} we have  

\begin{align*} 
& \operatorname{Var}[\widetilde{\W}_{1/2} (\rho(t, \cdot), \rho_E (t, \bar{x}_0))]\lesssim \|\rho_0\|_{\infty} \left[ \vphantom{\int \log \left( 1+ \frac{|\Phi_\delta (t,x_0) - \Phi_E (t,x_0) |}{\sqrt{h}} \right)}\|\rho_0\|_{L^1 (\T^d)} + |\log(h)|^2 \|\rho_0\|_{\infty} \Le ^d ( K^c) \right.\\
&\left.+ |\log(h)| \; \|\rho_0\|_{\infty}\int_{\T^d}  \log \left( 1+ \frac{|\Phi_\delta (t,x_0) - \Phi_E (t,x_0) |}{\sqrt{h}} \right) dx_0 \right]\nonumber  \\
&\lesssim \|\rho_0\|_{\infty} (\|\rho_0\|_{L^1 (\T^d)} + \|\rho_0\|_{\infty} \; |\log(h)|^{2-p} + \|\rho_0\|_{\infty} \; |\log(h)| )\\
&\lesssim |\log(h)|,
\end{align*}

so we have proved \eqref{logarithmic variance}.
The stronger rate for $p >d$ can be proved by noticing that we do not need to first estimate \eqref{wasserstein distance phi , phi_delta} and by defining $K$ such that 
$$|\T^d \setminus K| \leq \alpha^{-p} c_d ^p A_p (R, \Phi) ^p |\log(h)|^{-p} \text{ and } \operatorname{Lip}(\Phi(t,\cdot) \measurerestr K) \leq h^{- \alpha} \text{ for every }t. $$ 

Every estimate works the same way, but keeping track of the dependence on $\alpha$ gives us \eqref{logarithmic expected value for p>d} and \eqref{logarithmic variance for p>d}. Finally, if we also assume $p \geq 2$ we can keep the square in the integrand on the first line of \eqref{estimate square of log}, and with the same estimates as above we can conclude:

\begin{align*}
    \operatorname{Var}[\widetilde{\W}_{1/2} (\rho(t, \cdot), \rho_E (t, \bar{x}_0))]&\lesssim \|\rho_0\|_{L^1 (\T^d)} + \|\rho_0\|_{L^\infty} |\log(h)|^{2-p} \\
    &+ \|\rho_0\|_{\infty} \int_{\T^d}  \log \left( 1+ \frac{|\Phi_\delta (t,x_0) - \Phi_E (t,x_0)|}{\sqrt{h}} \right) ^2 dx_0.
\end{align*}

Since $p\geq 2$ we can use estimate the last integral using H\"older inequality and \eqref{Lp norm of logarithm}, and we have proved \eqref{bounded variance}. The estimate \eqref{bounded variance p>d} can be proved similarly, keeping track of the dependence of $\alpha$ on $\Le ^d (K^c)$.
\end{proof}

\begin{remark}[The case $\rho_0 \notin L^\infty (\T^d)$]\label{The case rho0 not bounded}
As already said, for the sake of simplicity we assumed $\rho_0$ to be bounded. In the general case, one can simply estimate $\rho_0$ with $\rho_0 \mathbb{1}_{\{ \rho_0 \leq K\}}$, with $\mathbb{1}_A$ denoting the indicator function of the set $A$. After this first approximation one should keep track of the dependence of the estimates on $K$.
\end{remark}

\begin{remark}[A possible alternative choice for $\bar{P}$]\label{remark on unbounded case with ad hoc probability}

   If the choice of the point $x_0 ^i$ in the cell $Q_i$ is not made according to a uniform distribution, but according to the probability density function $(\rho_0 \measurerestr Q_i)/M_i $, one can verify that both Theorem \ref{1 wasserstein theorem} and Theorem \ref{logarithmic wasserstein theorem} hold without the need to distinguish between the bounded and the unbounded cases. In particular, we point out that the crucial technical advantage here lies in the fact that if $x_0$ is picked according to the ad hoc probability just defined, it holds:

    \begin{align}\label{altra scelta possibile di P}
        &M_i \E [\widetilde{\W}_{1/2} (\delta_{\Phi (t,x_0)}, \delta_{\Phi_E (t,x_0)})] = M_i \int_{Q_i} \bar{\rho}_0 (x_0) \log \left( 1+ \frac{|\Phi_\delta (t,x_0) - \Phi_E (t,x_0)|}{\sqrt{h}}\right) dx_0 \\
        &=\int_{Q_i} \rho_0 (x_0) \log \left( 1+ \frac{|\Phi_\delta (t,x_0) - \Phi_E (t,x_0)|}{\sqrt{h}}\right) dx_0, \nonumber 
    \end{align} 

    while using a uniform distribution we needed to use that $ M_i / \Le ^d (Q_i) \leq \|\rho_0\|_{\infty}$, crucially relying on the boundedness of $\rho_0$. There are two reasons why we preferred to give the proofs using a uniform probability distribution even though it produces less sharp results. The first one is that a uniform probability distribution is simpler, and the second reason is that, in many practical cases, the initial distribution is bounded and the issue is given by the low regularity of the velocity field. We also notice that while using a bounded density we essentially only used $L^1$ stability estimates for the flow, if we had used the probability distribution just defined we would have really needed the full $L^p$ stability estimate on the flows. Indeed after summing the terms like \eqref{altra scelta possibile di P} for every $i$ we would have needed to use an estimate like:

    \begin{align*}   
    &\int_{\T^d} \rho_0 (x_0) \log \left( 1+ \frac{|\Phi_\delta (t,x_0) - \Phi_E (t,x_0)|}{\sqrt{h}}\right) dx_0 \leq \\
    &\|\rho_0 \|_{L^q (\T^d)} \left| \left| \log \left( 1+ \frac{|\Phi_\delta (t,\cdot) - \Phi_E (t,\cdot )|}{\sqrt{h}}\right) \right| \right|_{L^p(\T^d)}
    \end{align*}

    to conclude.
\end{remark}

Thanks to the above results we can now easily prove the following, which in a  certain sense is a Monte Carlo-like method. For the sake of simplicity and since the arguments would be identical, we give the result for a specific choice of $\Delta t, \Delta x$, $\alpha$ and only with respect to the $1$-Wasserstein distance.

\begin{proposition}\label{Monte-Carlo proposition}{\text{}}

    Let $u$, $\rho_0$ and $\{Q_i \}_{i=1} ^N$ satisfy the assumptions of Theorem \ref{1 wasserstein theorem}. Moreover, assume for simplicity $\alpha =1/2$, $\Delta t = \Delta x = h$ for some $h>0$ and that $C_t |\log(h)|^{-1} >> \sqrt{h}$. Then, defining 

    $$ S_n (t) \coloneqq \frac{1}{n} \sum_{i=1} ^n \rho_E ^i (t)$$

    the mean of $n$ realizations of $\rho_E$ (we omit to explicitly write for every $\rho_E ^i$ the dependence on $\bar{x}_0$) it holds that for every $k>1$

    $$ \bar{\P} \left(|\W_1 (S_n (t), \rho (t, \cdot)) - \E[\W_1 (S_n (t), \rho (t, \cdot))]| > k \sqrt{\frac{C_t |\log(h)|^{-1}}{n}} \right) \leq \frac{1}{k^2},$$

    where $C_t \to 0$ as $ t \to 0$ and $\E[\W_1 (S_n (t), \rho(t, \cdot))] \leq C_t |\log(h)|^{-1}$.
\end{proposition}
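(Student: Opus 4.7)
The plan is to apply Chebyshev's inequality to the scalar random variable $Y \coloneqq \W_1(S_n(t), \rho(t,\cdot))$, so the statement reduces to two quantitative estimates: (i) $\E[Y] \leq C_t |\log(h)|^{-1}$, and (ii) $\operatorname{Var}[Y] \lesssim C_t |\log(h)|^{-1}/n$. The control of the mean is the easy half; the control of the variance, where $1/n$ must be gained, is the substantive step and needs a bounded-differences argument rather than a naive sum decomposition.

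For (i), I would use the convexity of $\W_1$ in its first argument to write
\begin{equation*}
Y = \W_1\!\left(\tfrac{1}{n}\sum_i \rho_E^i(t),\, \rho(t,\cdot)\right) \leq \tfrac{1}{n}\sum_i \W_1(\rho_E^i(t), \rho(t,\cdot)),
\end{equation*}
then take the expectation and use that the realizations are i.i.d., so $\E[Y] \leq \E[\W_1(\rho_E(t), \rho(t,\cdot))]$. Plugging in \eqref{expected value desired estimate} from Theorem \ref{main theorem} with $\alpha = 1/2$, and using the standing assumption $C_t |\log(h)|^{-1} \gg \sqrt{h}$ to absorb the $h^{1-\alpha} = \sqrt{h}$ term, yields $\E[Y] \leq C_t|\log(h)|^{-1}$ as required.

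For (ii), since the $n$ realizations $\rho_E^i$ are built from independent batches of starting points $\bar{x}_0^{(i)}$, I would invoke the Efron-Stein (jackknife) inequality. Letting $\tilde{\rho}_E^i$ be an independent copy of $\rho_E^i$ and $S_n^{(i)}$ the modified average, the triangle inequality for $\W_1$ gives the bounded-differences estimate
\begin{equation*}
|Y - Y^{(i)}| \leq \W_1(S_n, S_n^{(i)}) \leq \tfrac{1}{n}\W_1(\rho_E^i, \tilde{\rho}_E^i),
\end{equation*}
and a further triangle inequality through $\rho(t,\cdot)$ yields $\E[\W_1(\rho_E^i, \tilde{\rho}_E^i)^2] \leq 4\,\E[\W_1(\rho_E, \rho)^2]$. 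The second moment on the right is $\operatorname{Var}[\W_1(\rho_E,\rho)] + \E[\W_1(\rho_E,\rho)]^2$, which by the variance bound \eqref{variance inequality} of Theorem \ref{main theorem} (with $\alpha=1/2$, $h$-terms absorbed by $C_t|\log(h)|^{-1} \gg \sqrt{h}$) is $\lesssim C_t|\log(h)|^{-1}$. Efron-Stein then gives
\begin{equation*}
\operatorname{Var}[Y] \leq \tfrac{1}{2}\sum_{i=1}^n \E[(Y-Y^{(i)})^2] \leq \tfrac{2}{n}\,\E[\W_1(\rho_E,\rho)^2] \lesssim \tfrac{C_t|\log(h)|^{-1}}{n},
\end{equation*}
and Chebyshev's inequality with $\sigma = \sqrt{\operatorname{Var}[Y]}$ concludes. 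The main obstacle is exactly this variance step: because $Y$ is a supremum over Lipschitz test functions rather than a linear functional of the random samples, one cannot simply write $\operatorname{Var}[Y] = \operatorname{Var}[X]/n$ for $X$ a single-realization Wasserstein distance. The Efron-Stein route circumvents this by exploiting the $1$-Lipschitz dependence of $\W_1$ on each realization (with Lipschitz constant $1/n$), reducing everything to the single-realization second moment already controlled by Theorem \ref{main theorem}.
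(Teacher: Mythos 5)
Your proof is correct, but it departs from the paper's argument in a meaningful way, and the departure is worth flagging. The paper introduces $X_i \coloneqq \W_1(\rho(t,\cdot),\rho_E^i(t))$, uses Lemma \ref{splitting wasserstein} to bound $\W_1(S_n,\rho) \leq \frac{1}{n}\sum_i X_i$, then asserts $\operatorname{Var} = \operatorname{Var}[X_1]/n$ by i.i.d.\ and applies Chebyshev. That variance identity, however, holds for the scalar average $\frac{1}{n}\sum_i X_i$, not for $\W_1(S_n,\rho)$ itself; the paper's notation tacitly conflates the two, and strictly speaking the Chebyshev step establishes concentration of the dominating proxy $\frac{1}{n}\sum_i X_i$ rather than of $\W_1(S_n,\rho)$. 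You identify precisely this obstruction (``one cannot simply write $\operatorname{Var}[Y]=\operatorname{Var}[X]/n$'') and route around it with Efron--Stein, exploiting that $\W_1(S_n,\rho)$ depends on each independent input $\rho_E^i$ with a $1/n$ bounded-difference (via the triangle inequality for $\W_1$ and Lemma \ref{splitting wasserstein} applied to $S_n$ and $S_n^{(i)}$, which differ only in the $i$-th summand). This costs you an extra triangle inequality and the single-realization second moment $\E[\W_1(\rho_E,\rho)^2] = \operatorname{Var}+\E^2$, both controlled by Theorem \ref{main theorem}, and yields the same $C_t|\log(h)|^{-1}/n$ order. So the two arguments arrive at the same quantitative conclusion, but yours genuinely controls $\operatorname{Var}[\W_1(S_n,\rho)]$ whereas the paper's is elementary at the cost of proving the bound for the upper-bounding average of the $X_i$.
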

\begin{proof}

   Let $X_i \coloneqq \W_1 (\rho (t, \cdot), \rho_E ^i (t))$ be a random variable with respect to the probability $\bar{P}$ defined in \eqref{definition of the probability}. By Lemma \ref{splitting wasserstein}:

   \begin{equation*}
       \W_1 (S_n (t) , \rho(t, \cdot)) \leq \frac{1}{n} \sum_{i=1} ^n \W_1 (\rho_E ^i (t), \rho(t, \cdot))
   \end{equation*}

     so by \eqref{expected value desired estimate} 
     
    \begin{equation}\label{inequality for expected value of S_n}
    \E[\W_1 (S_n (t), \rho(t, \cdot))] \leq \frac{1}{n} \sum_{i=1} ^n \E[\W_1 (\rho_E ^i (t), \rho(t, \cdot))] \lesssim C_t |\log(h)|^{-1} ,
    \end{equation} 

    and we proved the upper bound on $\E[\W_1 (S_n (t), \rho(t, \cdot))]$. By construction the random variables $X_i$'s are independent and identically distributed, so by \eqref{variance inequality}

    \begin{equation}\label{variance of S_n estimate} \operatorname{Var} [\W_1 (S_n (t), \rho (t, \cdot))] = \frac{\operatorname{Var}[X_1]}{n} \leq \frac{C_t |\log(h)|^{-1}}{n}.
    \end{equation}

    Finally by Chebychev inequality and  \eqref{variance of S_n estimate}:

    \begin{align*}
    & \bar{\P} \left(|\W_1 (S_n (t), \rho (t, \cdot)) - \E[\W_1 (S_n (t), \rho (t, \cdot))]|> k \sqrt{\frac{C_t |\log(h)|^{-1}}{n}}\right) \\ &\leq \bar{\P} \left(|\W_1 (S_n (t), \rho (t, \cdot)) - \E[\W_1 (S_n (t), \rho (t, \cdot))]|> k \sqrt{\operatorname{Var}[\W_1 (S_n (t), \rho (t, \cdot))]}\right) \leq \frac{1}{k^2}.
    \end{align*}

\end{proof}

\section{Diffuse deterministic approximation of $\rho$}

In the previous section we showed how, in expected value, an approximation of the initial datum  by a sum of Dirac deltas advected by the vector field $\Phi_E$ led to an approximation of the density $u$ which solves \eqref{continuity equation}. The point was to prove that the analogue of the explicit Euler method applied to one of the simplest (weak) approximation of the initial datum (i.e. a sum of Dirac deltas) works also in this low-regularity setting. Such an approximation, however, is not well-suited for many applications where one really wants a diffuse approximation of the density rather than a singular one (for instance when there is a coupling between the continuity equation and other equations). Moreover, we want to also present an approximation scheme which works deterministically. We thus consider a variation of the previous results to cover also this important case, showing how allowing for a stronger approximation of the initial datum and a suitable approximation of the regular Lagrangian flow $\Phi$ we can give a deterministic approximation. As we have just pointed out there are two major problems with the method presented in Section 3:

\begin{enumerate}
    \item The approximation of $u$ is a singular measure. Often, one would like a diffuse approximation of a diffuse initial datum.

    \item The estimates are valid only in expected value, essentially due to the high sensitivity (at least a priori, see Remark \ref{remark that maybe estimates hold pointwise}) to the choice of the points $x_0^i \in Q_i$.

\end{enumerate}

A way to fix the above issues is to approximate the flow $\Phi$ by performing an additional averaging in space and to consider a strong (in $L^1$ norm) approximation of the initial datum. In this way, deterministic estimates for both the $1$-Wasserstein distance and for the logarithmic Wasserstein distance introduced in the previous section will follow easily, as we will see in Theorem \ref{teorema approssimazione diffusa}. Let us now make these ideas rigorous.

\begin{definition}\textbf{Definition of $\bar{\Phi}_E$}\label{definition of mean euler flow}

Assume the domain $\T^d$ is partitioned in sets $\{Q_i\}_{i=1} ^n$. Let $b$ satisfy conditions \eqref{main hypotheses} with $p>d$. For every $Q_i$ fix a point $x_0 ^i \in Q_i$. Given $ t \in (t_n , t_{n+1})$ and $x \in Q_i$, we define:

\begin{equation}\label{mean euler flow definition equation}
    \bar{\Phi}_E (t,x) \coloneqq \bar{\Phi}_E (t_n, x) + (t-t_n) \fint_{t_n} ^{t_{n+1}} \fint_{Q_i} b (s, \bar{\Phi}_{E} (t_n, x_0 ^i) +y-x_0 ^i) dy ds,
\end{equation}

where $0=t_0 < t_1 < \dots < t_M =T$ is a partition of $[0,T]$ with $t_{i+1} - t_i =\Delta t$ for some $\Delta t>0$ and $\Phi_E (0,x)= x \; \forall x \in \T^d$. In order to keep the notation as light as possible, we omit explicitly writing the dependence of $\bar{\Phi}_E$ on $\Delta t$.
\end{definition}

Before proving the stability estimate between $\Phi$ and $\bar{\Phi}_E$, we believe it is convenient to explicitly explain what the idea behind the definition of $\bar{\Phi}_E$ is and what approximation of $u$ it can yield. Since the argument does not depend on the particular element of the partition, we drop the subscripts. Consider a point $x \in Q$. In the time interval $[0, \Delta t]$, such a point is transported by $\bar{\Phi}_E$ to

$$ \bar{\Phi}_E (\Delta t,x)= x + \int_0 ^{\Delta t} \fint_{Q} b(s, y) dy ds= x + v. $$

This translation by $v$ is applied to every point in $Q$, so in the first time step we are just translating $Q$ by the vector $v=\int_0 ^{\Delta t} \fint_{Q} b(s, y) dy ds$. Let $\tilde{Q} \coloneqq Q + v$. In the second time step, we have defined $\bar{\Phi}_E$ so that it is constant on $\tilde{Q}$ and it acts on $\tilde{Q}$ by translating it by $\int_{\Delta t} ^{2\Delta t} \fint_{\tilde{Q}} b(s, y) dy ds$  . Indeed, consider $\tilde{x} \in \tilde{Q}$. There exists $x \in Q$ such that $\tilde{x} = \bar{\Phi}_E (\Delta t,x)=x+ v \in \tilde{Q}$. Then for every $\tilde{x} \in \tilde{Q}$: 

\begin{align*}
&\bar{\Phi}_E (2\Delta t, x)=  \bar{\Phi}_E (\Delta t, x) + \int_{\Delta t} ^{2\Delta t} \fint_{Q} b(s, \bar{\Phi}_E (\Delta t,x) + y - x ) dy ds\\
&= \tilde{x} +\int_{\Delta t} ^{2\Delta t} \fint_{Q} b(s,  y+ v  ) dy ds = \tilde{x} + \int_{\Delta t} ^{2 \Delta t} \fint_{\tilde{Q}} b(s,  z ) dz ds = \tilde{x} + w
\end{align*}

with $w$ not dependent on $\tilde{x}$. This is, in a sense, the same thing we have done when defining $\Phi_E$. The difference is that while with $\Phi_E$ we were only integrating $b$ in time (when $p>d)$ and we kept the space variable fixed, here we also average over some translation of $Q$. To sum up, if we partition the initial datum as:

$$ u_0 = \sum_{i=1} ^N u_0 \measurerestr Q_i = \sum_{i=1} ^{N} u_0 ^i ,$$

the proposed approximation of $u(t, \cdot)$ via $\bar{\Phi}_E$ will simply be:

$$ [\bar{\Phi}_E (t, \cdot)] _\# u_0 = \sum_{i=1} ^N [\bar{\Phi}_E (t, \cdot)] _\# u_0 ^i =\sum_{i=1} ^N u_0 ^i (x-\bar{\Phi}_E (t, x_0 ^i)) .$$

We now prove the stability estimate between $\Phi$ and $\bar{\Phi}_E$.

\begin{proposition}\label{estimate between Phi and  bar PhiE}{\text{}}

Consider  $\{Q_i\}_{i=1} ^{N}$ a partition of $\T^d$ such that $\operatorname{diam} (Q_i) \leq \Delta x \; \forall i$, for some $\Delta x>0$. Let $b$ be a vector field satisfying \eqref{main hypotheses} with $p>d$ and let $\bar{\Phi}_E$ be defined as in Definition \ref{definition of mean euler flow} with time step $\Delta t$. It holds that:

   $$\|\Phi (t, \cdot) - \bar{\Phi}_E (t, \cdot)\|_{L^p} \lesssim C_t |\log(\max\{\Delta t , \Delta x\})|^{-1} $$
    
    with $C_t \to 0$ as $t \to 0$.
    \end{proposition}

\begin{proof}
   Let $h \coloneqq \max\{\Delta t , \Delta x\}$. We mimic the proof of Proposition \ref{proposition Lp estimate regularised rlf and euler flow} with $\bar{\Phi}_E$ in place of $\Phi_E$. We recall that the following is a pointwise inequality, which holds for every $x, x_0 \in Q$ with $Q \in \{Q_i\}_{i=1} ^N$ and for this reason we write $\Phi(t), \bar{\Phi}_E (t)$ in place of $\Phi(t,x), \bar{\Phi}_E (t,x)$. We have that:

     \begin{align*} 
    &\log \left( 1 + \frac{|\Phi (t_n)- \bar{\Phi}_E (t_n)|}{h} \right) - \log \left( 1 + \frac{|\Phi (t_{n-1})- \bar{\Phi}_E (t_{n-1})|}{h} \right) \\
    &\leq \frac{h}{|\Phi (t_{n-1}) - \bar{\Phi}_E (t_{n-1})| + h } \frac{|\;|\Phi (t_n)- \bar{\Phi}_E (t_n)| - |\Phi (t_{n-1}) - \bar{\Phi}_E (t_{n-1})| \;|}{h}  \\
    &\leq \frac{|\Phi (t_n) - \Phi (t_{n-1}) -( \bar{\Phi}_E (t_n) - \bar{\Phi}_E (t_{n-1})) |}{|\Phi (t_{n-1}) - \bar{\Phi}_E (t_{n-1}) | + h} \\
    &= \frac{\left|\int_{t_{n-1}} ^{t_n} b (s, \Phi (s)) - \left(\fint_{Q} b(s, \bar{\Phi}_E (t_{n-1} , x_0) +y-x_0) dy \right) ds \right|}{|\Phi (t_{n-1}) - \bar{\Phi}_E (t_{n-1}) | + h}.
    \end{align*}

      For the sake of readability, we denote:
     
     $$ h_j(y) \coloneqq \bar{\Phi}_E (t_{j} , x_0) +y-x_0.$$

     By telescopic summing, as in \eqref{logarithmic estimate}, we end up having:
    \allowdisplaybreaks
    \begin{align}\label{non so come chiemarti, servi per un remark}
    &\log \left( 1 + \frac{|\Phi (t_n)- \bar{\Phi}_E (t_n))|}{h} \right) \leq \sum_{j=0} ^{n-1} \frac{\left|\int_{t_{j}} ^{t_{j+1}} b (s, \Phi (s)) - \left(\fint_{Q} b (s, h_j(y)) dy \right) ds \right|}{|\Phi (t_j) - \bar{\Phi}_E (t_j)|  + h}   \\
    &\leq   \sum_{j=0} ^{n-1} \frac{|\int_{t_{j}} ^{t_{j+1}} b (s, \Phi (s)) - b (s, \Phi (t_j)) ds| + \left|\int_{t_{j}} ^{t_{j+1}} b (s, \Phi(t_j)) - \left(\fint_Q b (s, h_j (y)) dy \right) ds\right|}{|\Phi (t_j) - \bar{\Phi}_E (t_j)|  + h}  \nonumber\\
    &\leq \underbrace{\sum_{j=0} ^{n-1} \frac{\int_{t_{j}} ^{t_{j+1}} |b (s, \Phi (s)) - b (s, \Phi (t_j))| ds }{|\Phi (t_j) - \bar{\Phi}_E (t_j)|  + h}}_{= \circled{1}} + \underbrace{\sum_{j=0} ^{n-1} \frac{\int_{t_{j}} ^{t_{j+1}} \left|b (s, \Phi(t_j)) - \left(\fint_Q b (s, h_j (y)) dy \right) \right| ds}{|\Phi (t_j) - \bar{\Phi}_E (t_j)|  + h}}_{= \circled{2}} .\nonumber 
    \end{align}

     The term $\circled{1}$ can be estimated exactly as in Proposition \ref{proposition Lp estimate regularised rlf and euler flow}. As for $\circled{2}$, we use that $\operatorname{diam}(Q) \leq \Delta x$, so arguing as in \eqref{estimate for p>d in the approximation}:

    \begin{align}\label{estimate of 2 in bar PhiE proposition}
    &\frac{\left|b(s, \Phi (t_j) ) - \fint_Q b(s, h_j(y)) dy  \right|}{|\Phi(t_j)- \bar{\Phi}_E (t_j) |+ h } \leq \frac{\fint_Q \left|b(s, \Phi (t_j) ) -  b(s, h_j(y))\right| dy  }{|\Phi(t_j)- \bar{\Phi}_E (t_j) |+ h } \\
    &\lesssim \frac{\fint_Q |\Phi(t_j) - h_j (y) |dy }{|\Phi(t_j)- \bar{\Phi}_E (t_j) |+ h } f(s,\Phi(t_j)) \leq f(s, \Phi(t_j )), \nonumber 
    \end{align}

    where we used the anisotropic estimate of Lemma \ref{lemma Caravenna Crippa}, $f$ is defined as in \eqref{definition of f for the estimate in p>d} and for the last inequality we used the fact that for every $y \in Q$:
    
    \begin{equation}\label{estimate of numerator of 2 in bar PhiE proposition} 
    |\Phi(t_j) - h_j (y) | \leq |\Phi(t_j ) - \bar{\Phi}_E (t_j)| + |y- x_0| \leq |\Phi(t_j ) - \bar{\Phi}_E (t_j)| + \Delta x .
    \end{equation}
   
    We conclude that

 $$ \sup_{t \in [0,T]} \left | \left | \log \left( 1 + \frac{|\bar{\Phi}_E (t,\cdot)- \Phi (t,\cdot)|}{h} \right) \right| \right|_{L^p} \leq C_t,$$

 with $C_t \to 0$ as $t\to 0$ arguing as in Proposition \ref{proposition Lp estimate regularised rlf and euler flow}, and given this bound on the $L^p$ norm we can conclude as in Proposition \ref{Lp stability estimate for regular Lagrangian flows}.
\end{proof} 

\begin{remark}\textbf{The restriction $p>d$}\label{remark p>d}

    The anisotropic estimate of Lemma \ref{lemma Caravenna Crippa} is crucial for our proof of Proposition \ref{estimate between Phi and  bar PhiE}, and that is the reason why we only considered the case $p>d$. If $1<p\leq d$ we do not have such an estimate, and we cannot estimate $\circled{2}$ using Lemma \ref{pointwise bv inequality}. Indeed, contrary to what we could do with $\Phi_E$, we cannot change variables when using $\bar{\Phi}_E$, since it is clearly not injective. A more refined argument may solve the issue for $1<p\leq d$, but this will need further investigations.
\end{remark}

We can now prove the following.

\begin{theorem}\label{teorema approssimazione diffusa}\textbf{Diffuse deterministic approximation of $u$}

    Consider a partition $\{Q_i\}_{i=1} ^N$ of $\T^d$, an initial density $u_0$ and a vector field $b$ such that $\operatorname{diam}(Q_i) \leq \Delta x$ for every  $i$ and $b$ satisfies \eqref{main hypotheses} with $p>d$. Then, given $\bar{\Phi}_E$ as in Definition \ref{definition of mean euler flow} with a time step $\Delta t$ and a function $\bar{u}_0$ such that $\|\bar{u}_0 - u_0\|_{L^1 (\T^d)} \leq \varepsilon $ for some $\varepsilon>0$, it holds that

    $$ \W_1 (u(t, \cdot) , [\bar{\Phi}_E (t, \cdot)] _\# \bar{u}_0) \lesssim \max\{C_t |\log(\max\{\Delta t , \Delta x\})|^{-1} , \varepsilon \}.$$

    Consider $\widetilde{\W}_1$ the Wasserstein distance defined in Definition \ref{wasserstein distance definition} with $\alpha =1$. It holds that

    $$ \widetilde{\W}_1 (u(t, \cdot) , (\bar{\Phi}_E (t, \cdot)) _\# \bar{u}_0) \lesssim \max\{C_t, |\log(\max\{\Delta t , \Delta x\})| \varepsilon \}.$$

\end{theorem}
\begin{proof}
  Let $h \coloneqq \max\{\Delta t , \Delta x\}$. By the triangle inequality we have

    \begin{align}\label{straightforward estimate wasserstein distance}
        \W_1 (u(t, \cdot) ,[\bar{\Phi}_E (t, \cdot)]_\# \bar{u}_0) \leq \W_1 (\overbrace{\Phi(t, \cdot) _\# u_0}^{=u(t, \cdot)}, \Phi(t, \cdot)_\# \bar{u}_0 ) + \W_1 (\Phi (t, \cdot)_\# \bar{u}_0, [\bar{\Phi}_E (t, \cdot)]_\# \bar{u}_0).
    \end{align}

    We can estimate the first term with $\|\bar{u}_0 - u_0\|_{L^1} \leq \varepsilon $ and the second one with

    $$ \int_{\T^d} \bar{u}_0 |\Phi- \bar{\Phi}_E| dx \leq \|\bar{u}_0 \|_{L^q (\T^d)} \|\Phi- \bar{\Phi}_E\|_{L^p (\T^d)} \lesssim C_t |\log(h)|^{-1}$$

    by Proposition \ref{estimate between Phi and  bar PhiE}. As for the estimate for $\widetilde{\W}_1$, it follows in the exact same way. We just notice that $1$-Lipschitz functions on $\T^d$ with respect to the distance $\d_{1}$, defined in \eqref{distance d_alpha} with $\alpha =1$, are uniformly bounded by $\approx |\log(h)|$ so we estimate the first addendum in \eqref{straightforward estimate wasserstein distance} (with $\widetilde{\W}_1$ in place of $\W_1$) with $|\log(h)|\varepsilon $, implicitly using Kantorovich-Rubinstein duality. In the same way we can estimate the second addendum with:

    \begin{align*} 
    \widetilde{\W}_1 (\Phi(t, \cdot) _\# \bar{u}_0, [\bar{\Phi}_E (t, \cdot)]_\# \bar{u}_0 )& \leq  \int_{\T^d} \bar{u}_0 (x) \log \left(1 + \frac{|\Phi(t,x) - \bar{\Phi}_E (t, x)|}{h} \right) dx \\
    &\leq \|\bar{u}_0 \|_{L^q (\T^d)} \left| \left| \log \left(1 + \frac{|\Phi(t,\cdot) - \bar{\Phi}_E (t, \cdot)|}{h} \right) \right| \right|_{L^p (\T^d)} \lesssim C_t
    \end{align*}

    by Proposition \ref{estimate between Phi and  bar PhiE}, and the proof is complete.
\end{proof}

We remark that this method is very easily implemented numerically and parallelisable. Moreover, it provides a better rate of convergence for the logarithmic Wasserstein distance than the one proved in \cite{schlichting2017convergence} (assuming $\Delta t = \Delta x =h/2$ for some $h>0$), having $h$ in the denominator in the definition of the logarithmic Wasserstein distance rather than $\sqrt{h}$. We finally remark that such an improvement in the case $p>d$ is in accordance with what we proved in Theorem \ref{logarithmic wasserstein theorem}.

\begin{remark}\label{rmk: no cfl} \textbf{The lack of a CFL condition}

A usual drawback of explicit methods is the so-called CFL condition, i.e. the requirement that $ \|b\|_\infty \Delta t / \Delta x \lesssim 1$. In this case, there is no such condition. In section 3 we worked essentially in a meshless setting, and the partition $\{Q_i\}_{i=1} ^{N}$ had the only role of providing an approximation of the initial datum by selecting a Dirac mass with suitable weight inside each $Q_i$. We had either a situation where both the time step $\Delta t$ and the space discretization scale $\Delta x$ contributed to the total error separately (Theorem \ref{1 wasserstein theorem}) or a situation where we gave estimates with respect to the maximum of the two (Theorem \ref{logarithmic wasserstein theorem}). Similar estimates also apply  to the diffuse case, where the error depends on $h = \max\{\Delta t, \Delta x\}$. We remark that the lack of a CFL condition may derive from the different point of view employed: while Eulerian methods (like the finite volume method) seem to be inherently affected by the need of a CFL condition, Lagrangian methods may have the advantage of not needing this additional requirement. We refer the reader to \cite{IDELSOHN2012168} for a broader discussion on such a matter (in the case of regular vector fields). 

\end{remark}

\begin{remark}[Weak vs. strong approximations of $\rho_0$]\label{weak approximation of rho0 remark}
In Section 3 we worked with a sum of Dirac deltas which approximated, in a weak sense, the initial datum $\rho_0$. Choosing such a measure was very natural, since we wanted a ``true'' approximation method, in the sense that it is truly implementable in a computer. Indeed in Section 3 we used $\Phi_E$ which is not locally constant in the space variable, so the only possibility was to use measures concentrated on points. In section 4, working with $\bar{\Phi}_E$ which is locally constant in space, we are more free to choose the initial datum, and actually we can compute the push-forward of every initial datum $\bar{\rho}_0$ as we explained  after Definition \ref{definition of mean euler flow}. In Theorem \ref{teorema approssimazione diffusa} we imposed that $\bar{\rho}_0$ approximated $\rho_0$ in a strong $L^1$ sense, so it is natural to wonder if the same result holds only assuming $\bar{\rho}_0$ to be a measure approximating $\rho_0$ in a weak sense (for instance, one could consider $\bar{\rho}_0$ to be a sum of Dirac deltas just like in Section 3). Given $\W$ the Wasserstein distance with respect to some distance $\d$, in order to estimate $\W(\rho(t, \cdot), [\bar{\Phi}_E (t, \cdot)]_\# \bar{\rho}_0) = \W(\Phi(t, \cdot)_\# \rho_0, [\bar{\Phi}_E (t, \cdot)]_\# \bar{\rho}_0)$ we crucially rely on the triangle inequality. Namely, we need to estimate either $\W(\Phi(t, \cdot)_\# \rho_0, \Phi(t, \cdot)_\#\bar{\rho}_0)$ or $\W([\bar{\Phi}_E (t, \cdot)]_\# \rho_0, [\bar{\Phi}_E (t, \cdot)]_\# \bar{\rho}_0)$. The problem is that, to the best of our knowledge, in general such distances can only be estimated with the $L^1$ norm of the difference between $\rho_0$ and $\bar{\rho}_0$. If $f$ is a Lipschitz function it actually holds that $\W(f_\# \mu, f_\# \nu) \leq \operatorname{Lip}(f) \W(\mu, \nu)$, however neither $\Phi$ nor $\bar{\Phi}_E$ are Lipschitz. And even if we previously approximated $u$ with some Lipschitz function $\tilde{u}$, the Lipschitz constant of the flow of $\tilde{u}$ would depend exponentially on both $\operatorname{Lip}(\tilde{u})$ and on the time $t$, which is clearly not ideal. 
\end{remark}

\section{Final discussion, questions and future perspectives}

We conclude by briefly summing up the results obtained, pointing out some questions and future possible research directions. In our paper, we proved a novel stability estimate between the regular Lagrangian flow of a vector field $u$ with Sobolev regularity in space and an approximation of such flow via a (sort of) explicit Euler method. We gave two approximations of the flow, i.e. $\Phi_E$ and $\bar{\Phi}_E$, for which the same stability estimate holds. We used that the solution $\rho$ of the continuity equation \eqref{continuity equation} is representable as a push-forward of the initial datum $\rho_0$ via the regular Lagrangian flow $\Phi$ of $u$ to approximate it using our approximations $\Phi_E$ and $\bar{\Phi}_E$ of $\Phi$ and different approximations of the initial datum $\rho_0$. In section 3 we used $\Phi_E$ to approximate $\Phi$ and a sum of Dirac deltas to approximate $\rho_0$. In this setting, we proved that the Dirac deltas, advected by $\Phi_E$, approximate $\rho$ in suitable Wasserstein distances. However such an approximation holds only in expected value, since a priori it strongly depends on the choice of the Dirac deltas.

\begin{question}
    Is the estimate truly valid in expected value only, or it can be made deterministic if $h= \Delta t =\Delta x$ is small enough?
    \end{question}

In section 4, we used a different approximation $\bar{\Phi}_E$ of $\Phi$ and a strong approximation $\bar{\rho}_0$ of $\rho_0$ in $L^1$ norm to prove a deterministic estimate. However, due to technical reasons in our proof of Proposition \ref{estimate between Phi and  bar PhiE}, we can prove that the crucial estimate on $\|\Phi(t, \cdot) - \bar{\Phi}_E (t, \cdot)\|_{L^p (\T^d)}$ holds only assuming $p>d$.

\begin{question}
    Does the stability estimate on $\|\Phi(t, \cdot) - \bar{\Phi}_E (t, \cdot)\|_{L^p (\T^d)}$ also hold for $1<p \leq d$? 
\end{question}

Moreover, as already said in Remark \ref{weak approximation of rho0 remark}, we do not know if a strong approximation of $\rho_0$ is truly needed in Theorem \ref{teorema approssimazione diffusa}:

\begin{question}
    Does Theorem \ref{teorema approssimazione diffusa} hold even assuming to have as an initial datum a measure $\bar{\rho}_0$ approximating $\rho_0$ in a weak sense (for example in some Wasserstein distance)?
\end{question}

Of course, it is totally possible that a better \textit{explicit} approximation of $\Phi$, that has nothing to do with either $\Phi_E$ or $\bar{\Phi}_E$ and for which there are no issues like the ones we just described above, exists. For example, in \cite{erroranalysistheta} we are considering an approximation of $\Phi$ based on the $\theta$-method, which however for $\theta \neq 0$ is \textit{implicit}. To conclude, we highlight possible future research directions. While numerical approximation methods for the continuity equation (see \cite{ben2019convergence,boyer2012analysis,jabin2024discretizing,schlichting2017convergence,schlichting2018analysis,walkington2005convergence}) and recently advection diffusion equation (\cite{navarro2023error}) have received some attention, more general situations where there is a non linear coupling between the density and the velocity field seem less studied. In this direction we cite \cite{jabin2024discretizing}, whose results can be applied also to some non linear systems. However, a limitation of the available methods is a certain ``rigidity" with respect to the mesh which seems somehow unavoidable (see \cite[Example 1.1]{jabin2024discretizing}) when using an \textit{explicit} finite volume scheme (we recall \cite{schlichting2018analysis}, where they do not need a Cartesian mesh, but using an \textit{implicit} upwind finite volume method). For this reason the mesh needs to be either Cartesian (as in \cite{schlichting2017convergence}) or at least satisfy some additional conditions (as in \cite{jabin2024discretizing}), which may be problematic in some situations. Our method does not need any assumption on the mesh other than an upper bound on their diameter and, eventually, a lower bound on their volume. Moreover it is easily parallelisable, but it only provides weak stability estimates in Wasserstein distance. 

\vspace{10pt} 

Future challenges could be the approximation of the solution of coupled systems without relying on any particular structure of the mesh, or developing methods based on the Lagrangian point of view of \eqref{continuity equation} that approximate the solution $\rho$ in a stronger topology, and for this reason we think that our point of view could be useful in such a task.

\section{Appendix}

We list here some results used in the above sections. We point out that the following results are stated in $\R^d$, while we have worked in $\T^d$. This is not a substantial problem since the estimates are of local nature.

\begin{lemma}{\textbf{(\cite{stein1970singular})}}\label{pointwise bv inequality}

Let $ f \in BV (\R^d)$. There exists a negligible set $N \subset \R^d$ and a constant $c_n >0$ such that

    $$ |f(x) - f(y)| \leq c_n |x-y| (M_\lambda Df(x) + M_\lambda Df (y)) \text{ for } x, y \in \R^d \setminus N \text{ with } |x-y| \leq \lambda.$$
\end{lemma}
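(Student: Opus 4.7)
The plan is to prove this via the standard dyadic chain of balls argument combined with a Poincaré inequality for $BV$ functions, applied at Lebesgue points of $f$.

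First I would fix $N$ to be the complement of the set of Lebesgue points of $f$; this is a $\Le^d$-null set by the Lebesgue differentiation theorem (valid for $L^1_{\mathrm{loc}}$ functions, hence for $BV$). For any $x \in \R^d \setminus N$ and any ball $B = B(x,r)$, one has $f(x) = \lim_{k \to \infty} \fint_{B(x, 2^{-k} r)} f$. Writing this limit as a telescoping sum, we obtain
\begin{equation*}
f(x) - \fint_{B(x,r)} f = \sum_{k=0}^{\infty} \Bigl( \fint_{B(x, 2^{-(k+1)} r)} f - \fint_{B(x, 2^{-k} r)} f \Bigr).
\end{equation*}
Each term is controlled by $2^d \fint_{B(x, 2^{-k} r)} |f - f_{B(x, 2^{-k} r)}|$, and the Poincaré inequality for $BV$ functions on balls gives
\begin{equation*}
\fint_{B(x,\rho)} |f - f_{B(x,\rho)}|\, dz \leq C_d\, \rho\, \frac{|Df|(B(x,\rho))}{|B(x,\rho)|} \leq C_d\, \rho\, M_\lambda(Df)(x)
\end{equation*}
as long as $\rho \leq \lambda$. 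Summing the resulting geometric series in $2^{-k} r$ yields, whenever $r \leq \lambda$,
\begin{equation*}
\Bigl| f(x) - \fint_{B(x,r)} f \Bigr| \leq c_d\, r\, M_\lambda(Df)(x).
\end{equation*}

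Next, given $x, y \in \R^d \setminus N$ with $|x-y| \leq \lambda$, set $r = 2|x-y|$ so that $y \in B(x,r)$ and symmetrically $x \in B(y,r)$. Applying the estimate above centered at $x$ and at $y$ with the same radius $r$, and controlling the difference of the two averages $\fint_{B(x,r)} f$ and $\fint_{B(y,r)} f$ by comparing both to the average over a single ball $\tilde B$ of radius $2r$ containing both (for which again the Poincaré bound gives a constant times $r \, M_\lambda(Df)(x)$ or $r \, M_\lambda(Df)(y)$ after absorbing the constant ratio $|\tilde B|/|B(x,r)|$), the triangle inequality yields
\begin{equation*}
|f(x) - f(y)| \leq c_n\, |x-y|\, \bigl( M_\lambda(Df)(x) + M_\lambda(Df)(y) \bigr),
\end{equation*}
which is the claim.

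The main obstacle is really just the Poincaré inequality in the $BV$ setting; it is classical but requires one to know that $|Df|$ is a finite Radon measure and that the Poincaré-type estimate $\fint_B |f - f_B| \lesssim r\, |Df|(B)/|B|$ holds (proved for $W^{1,1}$ by the standard integral representation of $f - f_B$ and extended to $BV$ by approximation with smooth functions and lower semicontinuity of the total variation). Once this is granted, the rest of the argument is essentially bookkeeping of dyadic scales, where the crucial point is that all balls used have radius at most $\lambda$, which is precisely what permits the truncated maximal function $M_\lambda$ rather than the full maximal function to appear in the final bound.
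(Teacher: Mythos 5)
The paper gives no proof of this lemma; it is stated as a quoted result from \cite{stein1970singular} (and appears as Lemma~A.1 in \cite{crippa2008estimates}), so there is no paper argument to compare against. Your proof is the standard one and is essentially correct: take $N$ to be the set of non-Lebesgue points, telescope dyadic averages to get $|f(x)-\fint_{B(x,r)}f|\lesssim r\,M_\lambda(Df)(x)$ via the $BV$-Poincar\'e inequality $\fint_B|f-f_B|\lesssim \rho\,|Df|(B)/|B|$, and then bridge $x$ and $y$ through overlapping balls. The one inaccuracy is a harmless slippage in the truncation parameter: with $r=2|x-y|$ and an auxiliary ball $\tilde B$ of radius $2r$, the balls that you feed into the Poincar\'e step have radius up to $4|x-y|$, so under the hypothesis $|x-y|\le\lambda$ you actually produce $M_{4\lambda}(Df)$ rather than $M_\lambda(Df)$. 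This dimensional factor is routinely absorbed into the statement, but if you want $M_\lambda$ exactly, take $r=|x-y|$ (so that $B(x,r)$ and $B(y,r)$ are centred at $x$ and $y$ and have radius $\le\lambda$) and compare $f_{B(x,r)}$ with $f_{B(y,r)}$ through the average over $B(x,r)\cap B(y,r)$, whose volume is a fixed dimensional fraction of $|B(x,r)|$, instead of through an enlarged ball.
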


\begin{lemma} {\textbf{(\cite[Lemma 5.1]{caravenna2021directional}})} \label{lemma Caravenna Crippa}

Let $f \in W^{1,p} (\R^d) $ with $p>d$. Then

    $$ |f(x) - f(y)| \leq c_{d,p} |x-y| [ M(|Df|^p) (x) ]^{1/p} \text{ for every } x,y \in \R^d .$$
\end{lemma}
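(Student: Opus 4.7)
The plan is to prove this by localising the classical Morrey pointwise estimate and then comparing the $L^p$ norm of $|Df|$ over a carefully chosen ball against the maximal function at $x$. Set $r := 2|x-y|$ and $B := B(x,r)$, so that both $x$ and $y$ lie in $B$ and, crucially, the ball is centred at $x$. The strategy is to bound each of $|f(x)-f_B|$ and $|f(y)-f_B|$ by $C_{d,p}\,|x-y|\,[M(|Df|^p)(x)]^{1/p}$ and then conclude by the triangle inequality; the asymmetry in the statement (only $M$ at $x$ appears) is absorbed by insisting that the comparison ball be centred at $x$.

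The first and most substantial ingredient is the pointwise Riesz-potential representation: for every $\xi\in B$,
\begin{equation*}
|f(\xi)-f_B|\le C_d \int_B \frac{|Df(z)|}{|\xi-z|^{d-1}}\,dz.
\end{equation*}
This is the standard estimate obtained by writing $f(\xi)-f(w)$ as an integral of $\nabla f$ along the segment from $\xi$ to $w$ (for smooth $f$), averaging over $w\in B$, and switching to polar coordinates centred at $\xi$ via Fubini. The formula then extends to $f\in W^{1,p}$ by approximation. Applying H\"older's inequality with conjugate exponents $p$ and $q=p/(p-1)$ yields
\begin{equation*}
|f(\xi)-f_B|\le C_d\,\|Df\|_{L^p(B)}\left(\int_B |\xi-z|^{-(d-1)q}\,dz\right)^{1/q}.
\end{equation*}
Here the hypothesis $p>d$ enters decisively: it forces $(d-1)q<d$, so the singular integral converges, and using $B\subset B(\xi,2r)$ together with a direct polar computation gives $\bigl(\int_B |\xi-z|^{-(d-1)q}\,dz\bigr)^{1/q}\le C_{d,p}\, r^{1-d/p}$.

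Finally, since $B=B(x,r)$ is centred at $x$, we have by definition of the maximal function
\begin{equation*}
\|Df\|_{L^p(B)}^p = |B|\fint_B |Df|^p \le |B|\, M(|Df|^p)(x),
\end{equation*}
so $\|Df\|_{L^p(B)}\le C_d\, r^{d/p}[M(|Df|^p)(x)]^{1/p}$. Combining with the previous display gives $|f(\xi)-f_B|\le C_{d,p}\,r\,[M(|Df|^p)(x)]^{1/p}$ for $\xi\in\{x,y\}$, and the triangle inequality together with $r=2|x-y|$ yields the stated inequality. The main obstacle is the derivation of the Riesz-potential bound for $|f(\xi)-f_B|$; once that classical estimate is in hand, the rest is a routine H\"older argument in which the condition $p>d$ guarantees integrability of the singular kernel, and the choice of a ball centred at $x$ is what allows the right-hand side to depend only on $M(|Df|^p)(x)$ rather than also on $M(|Df|^p)(y)$.
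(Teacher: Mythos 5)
Your proof is correct. The paper itself does not prove this lemma; it is simply cited from Caravenna--Crippa (their Lemma 5.1), and your argument — the Riesz-potential bound $|f(\xi)-f_B|\lesssim \int_B |Df(z)||\xi-z|^{1-d}\,dz$ on a ball $B$ \emph{centred at} $x$, followed by H\"older's inequality (where $p>d$ is exactly what makes $(d-1)p/(p-1)<d$ and hence the singular kernel integrable) and the comparison $\fint_B|Df|^p\le M(|Df|^p)(x)$ — is precisely the standard route, with the crucial asymmetry of the statement coming from the choice of centre for $B$, as you correctly emphasize.
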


\begin{lemma}{\textbf{(\cite[Theorem 2.1 and Proposition 2.3]{crippa2008estimates})}}\label{Lishitz rlf}

 Let $b$ be a bounded vector field belonging to $L^1 (0,T; W^{1,p} (\R^d)$ for some $p>1$ and with $div(b)^- \in L^1 (0,T;L^\infty (\R^d))$. Let $X: [0,T] \times \R^d \to \R^d$ be a regular Lagrangian flow for $b$. Let $L$ be the compressibility of the flow as defined in Definition \ref{rlf definition}. Define the quantity:

 $$ A_p (R,X)\coloneqq  \left[ \int_{B_R (0) }  \left( \sup_{0 \leq t \leq T} \sup_{0<r<2R} \fint_{B_r (x)} \log \left( 1+ \frac{|X(t,x)- X(t,y)|}{r} \right)dy\right)^p dx\right]^{1/p}.$$

 It holds that $A_p (R,X) \leq C(R, L, \|D_x b\|_{L^1 (L^p)})$. Moreover for every $\varepsilon>0$ and every $R>0$ we can find a set $K \subset B_R (0)$ such that $|B_R (0) \setminus K| \leq \varepsilon$ and for every $t \in [0,T]$ it holds

 $$ \operatorname{Lip} (X(t, \cdot) \measurerestr K) \leq \exp \left( \frac{c_d A_p (R, X)}{\varepsilon ^{1/p}}\right),$$

 where $c_d >0$ is a dimensional constant.
\end{lemma}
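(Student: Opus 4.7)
The plan has two ingredients: first an integral estimate showing that the quantity $A_p(R,X)$ is controlled by $\|D_x b\|_{L^1(L^p)}$ and the compressibility $L$, and second a Chebychev + local-Morrey style argument that upgrades this integrability to a quantitative Lusin--Lipschitz property.

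For the bound on $A_p(R,X)$, set $F(t,x,r) := \fint_{B_r(x)} \log\bigl(1 + |X(t,x) - X(t,y)|/r\bigr)\,dy$. The key is to differentiate $F$ in time for fixed $x,r$: since $\partial_t X(t,z) = b(t,X(t,z))$, a direct computation (as in \eqref{g' inequality}) gives
\begin{equation*}
\partial_t F(t,x,r) \leq \fint_{B_r(x)} \frac{|b(t,X(t,x)) - b(t,X(t,y))|}{r + |X(t,x) - X(t,y)|}\,dy.
\end{equation*}
Using the pointwise inequality of Lemma \ref{pointwise bv inequality} with the truncation $\lambda \coloneqq 2R + 2T\|b\|_\infty$ (which is admissible because $X(t,x),X(t,y)$ stay within distance $\lambda$ when $x,y\in B_{3R}$) and the trivial bound $|X(t,x)-X(t,y)|/(r + |X(t,x)-X(t,y)|) \leq 1$, this reduces to
\begin{equation*}
\partial_t F(t,x,r) \;\lesssim\; M_\lambda|Db|(t,X(t,x)) \;+\; \fint_{B_r(x)} M_\lambda|Db|(t,X(t,y))\,dy.
\end{equation*}
Taking the $\sup$ over $r \in (0,2R)$ in the right-hand side yields a bound independent of $r$ involving only $M_\lambda|Db|(t,X(t,\cdot))$ and its spatial maximal function in $y$. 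Since $F(0,x,r)\leq \log 2$, integrating in time and then taking the $L^p(B_R)$ norm in $x$, one uses the $L^p$ boundedness of the Hardy--Littlewood maximal operator (available because $p>1$) and a change of variable $z=X(t,y)$ controlled by the compressibility constant $L$ to conclude
\begin{equation*}
A_p(R,X) \;\leq\; C\bigl(R, L, \|D_x b\|_{L^1(L^p)}\bigr).
\end{equation*}

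For the Lusin--Lipschitz statement, denote the $L^p$-integrand of $A_p(R,X)^p$ by $\Psi(x)$, so that $\|\Psi\|_{L^p(B_R)} = A_p(R,X)$. By Chebychev, the sublevel set $K \coloneqq \{x\in B_R : \Psi(x) \leq k\}$ satisfies $|B_R\setminus K|\leq A_p^p/k^p$, so choosing $k = c_d A_p/\varepsilon^{1/p}$ gives $|B_R\setminus K|\leq \varepsilon$. Now for $x,y\in K$ set $r \coloneqq |x-y|$, and note that the lens $B_r(x)\cap B_r(y)$ has volume $\geq \theta_d |B_r|$ for a dimensional constant $\theta_d>0$. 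By the definition of $\Psi$, for each $t\in[0,T]$
\begin{equation*}
\fint_{B_r(x)} \log\!\Bigl(1 + \tfrac{|X(t,x)-X(t,z)|}{r}\Bigr) dz \;\leq\; k,
\qquad
\fint_{B_r(y)} \log\!\Bigl(1 + \tfrac{|X(t,y)-X(t,z)|}{r}\Bigr) dz \;\leq\; k,
\end{equation*}
so Chebychev inside each ball shows that the $z$'s where either logarithm exceeds $C_d k$ form a set of volume $< \tfrac{1}{2}\theta_d |B_r|$. Intersecting with $B_r(x)\cap B_r(y)$ leaves a set of positive measure, hence there exists $z$ with both logarithms bounded by $C_d k$. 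The triangle inequality then yields $|X(t,x)-X(t,y)|\leq 2r\,e^{C_d k} = 2|x-y|\exp(c_d A_p/\varepsilon^{1/p})$, which is the desired Lipschitz estimate on $K$.

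The first step is the only real technical point: the delicate issue is making the pointwise BV inequality applicable uniformly in $t$ and $y$ (this is where the truncation level $\lambda$, the essential use of compressibility to control $X(t,\cdot)_\#\mathscr{L}^d$ when changing variables, and the hypothesis $\div(b)^-\in L^1(L^\infty)$ all come in), together with correctly commuting the $\sup_r$ past the time integral, which is what makes the inside-the-integrand supremum in the definition of $A_p(R,X)$ actually controllable. The Chebychev/lens-intersection step of the second part is in contrast quite mechanical once $A_p$ is finite.
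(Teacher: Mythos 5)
The paper does not actually prove this lemma: it is cited verbatim from Crippa--De Lellis~\cite{crippa2008estimates} (their Theorem~2.1 and Proposition~2.3), so there is no in-paper proof to compare against. That said, your reconstruction is correct and is essentially the CD08 argument: differentiate the $\fint\log(1+\cdot)$ quantity in time, apply the pointwise Sobolev/BV inequality (Lemma~\ref{pointwise bv inequality}) at the truncation scale $\lambda = 2R + 2T\|b\|_\infty$, absorb the $r$-supremum into a maximal function of $M_\lambda|Db|\circ X$, pass the $L^p_x$ norm through a Minkowski integral inequality, and use the $L^p$-boundedness of the maximal operator (valid since $p>1$) together with the compressibility change of variables; the Lusin--Lipschitz step is the standard Chebychev sublevel set plus lens-intersection argument. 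Two very minor points worth tightening: the choice $k=c_d A_p/\varepsilon^{1/p}$ should be $k=A_p/\varepsilon^{1/p}$ if the dimensional constant $c_d$ is meant to appear only in the exponent of the Lipschitz bound; and the constant controlling $A_p$ also implicitly depends on $\|b\|_\infty$ through the truncation radius $\lambda$ (this is actually an omission in the lemma's statement in the paper, not in your proof). Neither affects the validity of the argument.
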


We finally report the proof of Lemma \ref{lemma soluzione cont eqn è push forward via rlf}:

\begin{proof}[Proof of Lemma \ref{lemma soluzione cont eqn è push forward via rlf}]
    We give for granted both the uniqueness of the solution of \eqref{continuity equation} and the existence of $\Phi$ (see \cite{diperna1989ordinary}, \cite{crippa2008estimates}), so we need only to verify that $\rho$ satisfies \eqref{continuity equation}, i.e.  

     $$ \int_0 ^T \int_{\T^d} \rho (\partial_t \xi + u\cdot \nabla_x \xi) dx dt + \int_{\T^d} \rho_0 (x) \xi (0, x) dx =0 \text{ for every } \xi \in C_c ^\infty ([0,T) \times \T^d).$$

      Indeed by definition, denoting with $\langle \cdot , \cdot \rangle$ the duality pairing:

     \begin{align*}
         \langle \partial_t \rho + \div_x (u\rho), \xi \rangle =  
 \int_0 ^T \int_{\T ^d}  \langle \partial_t \rho , \xi \rangle + \langle \div(\rho u) , \xi \rangle dx dt  .
     \end{align*}

   Now:

   $$ \int_0 ^T \langle \partial_t \rho , \xi \rangle dt  = [\rho \xi ]_{0} ^{T} -\int_0 ^T \rho \partial \xi dt = -\rho_0 (x) \xi (0, x) - \int_0 ^T \rho \partial_t \xi dt $$

   and 

   $$ \int_{\T^d} \div(u \rho) \xi dx = - \int_{\T^d} \rho u \cdot \nabla_x \xi dx.$$

   So $\rho$ solves \eqref{continuity equation} if

   $$  0= \langle \partial_t \rho + \div_x (u\rho), \xi \rangle = -\int_{\T^d} \rho_0 \xi (0, x) dx - \int_0 ^T \int_{\T^d} \rho ( \partial_t \xi + u \cdot \nabla_x \xi ) dx dt.$$
   
   Applying the push-forward formula in the second integral, since $\rho (t,\cdot) = \Phi (t,\cdot)_\# \rho_0 $:

    \begin{align*}
        &\int_0 ^T \int_{\T ^d} \rho [\partial_t \xi  + (\nabla_x \xi \cdot u)] dx dt\\
        &=  \int_0 ^T \int_{\T ^d}  \rho_0 [\partial_t \xi (t, \Phi(t,x)) + \nabla _x \xi (t, \Phi(t,x)) \cdot u(t, \Phi (t,x))] dx dt \\
        &= \int_0 ^T \int_{\T ^d} \rho_0 (x)\frac{d}{dt} \Big( \xi (t, \Phi(t,x)) \Big) dx dt= \int_{ \T^d}  \rho_0 (x) \left(\int_0 ^ T \frac{d}{dt}  \Big( \xi (t, \Phi(t,x))  \Big) dt\right) dx  \\
        & =- \int_{\T^d} \rho_0 (x) \xi (0,x) dx,
    \end{align*}
    and we conclude. 
 \end{proof}

\section{Acknowledgments}

The author is a member of the INdAM group GNAMPA. We wish to thank prof. Giovanni Alberti and prof. André Schlichting for the precious help and the suggestions offered in writing this article.

    \bibliographystyle{plain}
    \bibliography{bibliography.bib}

\end{document}